\colorlet{darkblue}{blue!90!black}
\colorlet{darkred}{red!90!black}
\newtheorem{theorem}{Theorem}[section]
\newtheorem{proposition}[theorem]{Proposition}
\newtheorem{corollary}[theorem]{Corollary}
\newtheorem{conjecture}{Conjectrure}[section]
\theoremstyle{definition}
\newtheorem{definition}[theorem]{Definition}
\newtheorem{remark}[theorem]{Remark}
\def\emptyset{\varnothing}
\newcommand{\bxi}{\boldsymbol\xi}
\newcommand{\poly}{\mathrm{poly}}
\newcommand{\E}{\mathbf E}
\newcommand{\bR}{\mathbf{R}}
\renewcommand{\P}{\mathbf P}
\newcommand{\MM}{\mathscr{M}}
\newcommand{\ST}{\mathscr{T}}
\newcommand{\SD}{\mathscr{D}}
\newcommand{\LL}{\mathscr{L}}
\newcommand{\ch}{\mathcal{h}}
\newcommand{\KPZ}{\mathrm{KPZ}}
\definecolor{darkred}{rgb}{0.7,0.1,0.1}
\definecolor{darkblue}{rgb}{0.1,0.1,0.8}
\def\s{\mathfrak s}
\def\T{\mathbb{T}}
\def\${|\!|\!|}
\def\deg{\mathop{\mathrm{deg}}}
\def\1{\mathbbm{1}}
\tikzset{
	doth/.style={circle,fill=white,draw=black, solid,inner sep=0pt,minimum size=0.5mm},
	yy/.style={circle,fill=gray!20,draw=black,inner sep=0pt,minimum size=0.8mm},
	>=stealth,
	}
\tikzset{
	root/.style={circle,fill=testcolor,inner sep=0pt, minimum size=2mm},
	dot/.style={circle,fill=black,inner sep=0pt, minimum size=1mm},
	smalldot/.style={circle,fill=black,inner sep=0pt, minimum size=0.5mm},
	edge/.style={thick},
	edged/.style={thick, densely dotted},
	var/.style={circle,fill=black!10,draw=black,inner sep=0pt, minimum size=2mm},
	circ/.style={circle,fill=white,draw=black,inner sep=0pt, minimum size=1.2mm},
	dotred/.style={circle,fill=black!50,inner sep=0pt, minimum size=2mm},
	generic/.style={semithick,shorten >=1pt,shorten <=1pt},
	gepsilon/.style={semithick,shorten >=1pt,shorten <=1pt,densely dashed},
	dist/.style={ultra thick,draw=testcolor,shorten >=1pt,shorten <=1pt},
	testfcn/.style={ultra thick,testcolor,shorten >=1pt,shorten <=1pt,<-},
	testfcnx/.style={ultra thick,testcolor,shorten >=1pt,shorten <=1pt,<-,
		postaction={decorate,decoration={markings,mark=at position 0.6 with {\drawx}}}},
	kprime/.style={semithick,shorten >=1pt,shorten <=1pt,dotted,->},
	kprimex/.style={semithick,shorten >=1pt,shorten <=1pt,densely dashed,->,
		postaction={decorate,decoration={markings,mark=at position 0.4 with {\drawx}}}},
	kernel/.style={semithick,shorten >=1pt,shorten <=1pt,->},
	multx/.style={shorten >=1pt,shorten <=1pt,
		postaction={decorate,decoration={markings,mark=at position 0.5 with {\drawx}}}},
	kernelx/.style={semithick,shorten >=1pt,shorten <=1pt,->,
		postaction={decorate,decoration={markings,mark=at position 0.4 with {\drawx}}}},
	kepsilon/.style={semithick,shorten >=1pt,shorten <=1pt,densely dashed,->},
	kernel1/.style={->,semithick,shorten >=1pt,shorten <=1pt,postaction={decorate,decoration={markings,mark=at position 0.45 with {\draw[-] (0,-0.1) -- (0,0.1);}}}},
	kernel2/.style={->,semithick,shorten >=1pt,shorten <=1pt,postaction={decorate,decoration={markings,mark=at position 0.45 with {\draw[-] (0.05,-0.1) -- (0.05,0.1);\draw[-] (-0.05,-0.1) -- (-0.05,0.1);}}}},
	kernelBig/.style={semithick,shorten >=1pt,shorten <=1pt,decorate, decoration={zigzag,amplitude=1.5pt,segment length = 3pt,pre length=2pt,post length=2pt}},
	rho/.style={dotted,semithick,shorten >=1pt,shorten <=1pt},
	renorm/.style={shape=circle,fill=white,inner sep=1pt},
	labl/.style={shape=rectangle,fill=white,inner sep=1pt},
	xi/.style={circle,fill=symbols!10,draw=symbols,inner sep=0pt,minimum size=1.2mm},
	xix/.style={crosscircle,fill=symbols!10,draw=symbols,inner sep=0pt,minimum size=1.2mm},
	xib/.style={circle,fill=symbols!10,draw=symbols,inner sep=0pt,minimum size=1.6mm},
	xibx/.style={crosscircle,fill=symbols!10,draw=symbols,inner sep=0pt,minimum size=1.6mm},
	not/.style={circle,fill=symbols,draw=symbols,inner sep=0pt,minimum size=0.5mm},
	>=stealth,
	}
\def\DeclareSymbol#1#2#3{\expandafter\gdef\csname MH@symb@#1\endcsname{\tikz[baseline=#2,scale=0.15]{#3}}%
\expandafter\gdef\csname MH@symb@#1s\endcsname{\scalebox{0.6}{\tikz[baseline=#2,scale=0.15]{#3}}}}
\def\<#1>{\csname MH@symb@#1\endcsname}
\begin{document}

\title{The strong Feller property of the open KPZ equation}
\author{Alisa Knizel$^1$ and Konstantin Matetski$^2$}
\institute{University of Chicago, \email{aknizel@uchicago.edu}
\and Michigan State University, \email{matetski@msu.edu}}
\titleindent=0.65cm

\maketitle
\thispagestyle{empty}

\begin{abstract}
We prove that the semigroup, generated by the open KPZ equation on a
bounded spatial interval with Neumann boundary conditions
parametrized by real parameters $u$ and $v$, enjoys
the strong Feller property. From this, we conclude that for the values of the parameters satisfying either $u+v = 0$ or $u+v>0$, $\min (u,v)>-1$ the WASEP-stationary measure
constructed in \cite{CorwinKnizel} is the unique stationary measure
for the equation. It is expected that the same conclusion holds for
all values of $u$ and $v$.
\end{abstract}


\section{Introduction}

We consider $h : \R_+ \times [0,1] \to \R$ to be the solution of the \emph{open KPZ equation}
\begin{equ}[eq:openKPZ]
\partial_t h = \frac{1}{2} \partial^2_x h + \frac{1}{2} (\partial_x h)^2 + \xi, \qquad h(0,\cdot) = h_0(\cdot),
\end{equ}
with Neumann boundary conditions
\begin{equ}[eq:boundary]
\partial_x h(t,x) \Big|_{x = 0}= u, \qquad \partial_x h(t,x) \Big|_{x = 1}= -v,
\end{equ}
driven by a Gaussian space-time white noise $\xi$ on $\R \times
[0,1]$. The boundary parameters $u, v \in \R$ and a sufficiently
regular initial state $h_0 : [0,1] \to \R$ are fixed.

In contrast to the standard KPZ equation \cite{IntroToKPZ, HairerKPZ}, the difficulty with solving equation \eqref{eq:openKPZ} comes not only from the singular non-linearity, but also from the boundary conditions \eqref{eq:boundary}. Indeed, the standard analysis of the linearized version of \eqref{eq:openKPZ} suggests that the solution $h(t,x)$, provided it is defined, should have the same spatial regularity as a Brownian motion \cite{DaPratoZabczyk}, i.e., $h(t, \cdot) \in \CC^\alpha([0,1])$ for every $\alpha < \frac{1}{2}$. This particularly implies that the non-linearity in \eqref{eq:openKPZ} is a priori undefined. On the other hand, the boundary conditions \eqref{eq:boundary} are undefined either. The extension of the theory of regularity structures to domains with boundaries \cite{MateBoundary} allows to give a notion of solution to the open KPZ as a limit of \eqref{eq:openKPZ} with a mollified noise, after suitable renormalization. One of the interesting observations in \cite{MateBoundary} was that both the non-linearity and the boundary values should be renormalized. We provide more details on this solution in Section~\ref{sec:solution}.

Another way to solve equation \eqref{eq:openKPZ} is given by the standard Hopf-Cole transformation \cite{CorwinShen}, which yields the same solution as \cite{MateBoundary}. Namely, if we formally define  
\begin{equ}
Z(t,x) := e^{h(t,x)},
\end{equ} 
then $Z$ solves the \emph{stochastic heat equation} with a multiplicative noise
\begin{equ}[eq:SHE]
\partial_t Z = \frac{1}{2} \partial^2_x Z + Z \xi, \qquad Z(0,\cdot) = Z_0(\cdot),
\end{equ}
 with $Z_0(x) = e^{h_0(x)}$ and with inhomogeneous Robin boundary conditions
\begin{equ}
\partial_x Z(t,x) \Big|_{x = 0}= \Bigl(u - \frac{1}{2}\Bigr) Z(t,0), \qquad \partial_x Z(t,x) \Big|_{x = 1}= -\Bigl(v - \frac{1}{2}\Bigr) Z(t,1).
\end{equ}
In the case when the noise $\xi$ is smooth, this derivation is rigorous. If $\xi$ is a white noise, then \eqref{eq:SHE} can be solved classically, using a fixed point argument for the mild reformulation of the equation 
\begin{equ}
Z(t,x) = \int_0^1 P_{t}(x, y) Z_0(y) d y + \int_0^1 \int_{0}^t P_{t-s}(x,y) Z(s, y) \xi(d s, d y),
\end{equ}
with the last integral defined in the It\^{o} sense. Here, $P_{t}(x, y)$ is the heat kernel on $x, y \in [0, 1]$, which is the unique solution of equation
\begin{equ}
\partial_t P_{t}(x, y) = \frac{1}{2} \partial^2_x P_{t}(x, y), \qquad P_{0}(x, y) = \delta_{x - y},
\end{equ}
with boundary conditions 
\begin{equ}
\partial_x P_{t}(0, y) = \Bigl(u - \frac{1}{2}\Bigr) P_t(0,y), \qquad \partial_x P_{t}(1, y) = -\Bigl(v - \frac{1}{2}\Bigr) P_{t}(1, y),
\end{equ}
for all $t > 0$ and $y \in [0,1]$. More precisely, \cite[Prop.~2.7]{CorwinShen} states that if all moments of a random initial state $Z_0(x)$ are bounded uniformly in $x$, then there exists a unique adapted global solution of \eqref{eq:SHE} with a bounded second moment. If moreover $Z_0$ is positive, then for any $t > 0$ the solution $Z(t, x)$ is almost surely strictly positive. In this case the Hopf-Cole solution to the open KPZ equation \eqref{eq:openKPZ} is defined as $h(t,x) = \log Z(t,x)$.

If we set $u(t,x) = \partial_x h(t,x)$, where the differentiation is in the sense of distributions, then $u$ solves the stochastic Burgers equation 
\begin{equ}[eq:Burgers]
\partial_t u = \frac{1}{2} \partial^2_x u + \frac{1}{2} u^2 + \partial_x\xi, \qquad u(0,\cdot) = u_0(\cdot),
\end{equ}
with $u_0 = \partial_x h_0$ and with Dirichlet boundary conditions 
\begin{equ}
u(t,0) = u, \qquad u(t,1) = -v.
\end{equ}

As for the standard KPZ equation, the usual definition of a stationary measure does not make sense for \eqref{eq:openKPZ}, because of a time-dependent shift at the origin. This is however not the case for the stochastic Burgers equation \eqref{eq:Burgers}, in which the spatial derivative removes the shift. More precisely, a stationary measure of the open KPZ equation \eqref{eq:openKPZ} was defined in \cite{CorwinKnizel} as the law of a random function $\ch_{u, v} : [0, 1] \to \R$, with $\ch_{u, v}(0)= 0$, such that if $h$ is the solution of \eqref{eq:openKPZ} with the initial state $h_0 = \ch_{u, v}$, then for each $t > 0$ the law of the random function $x \mapsto h(t,x) - h(t, 0)$ is independent of $t$ and is equal to the law of $\ch_{u, v}$. Existence of a stationary measure for any $u, v \in \R$ was proved in \cite{CorwinKnizel}, and in the case $u + v \geq 0$ the distribution of $\ch_{u, v}$ was characterized by an exact formula for its multi-point Laplace transform. We provide more information about the stationary measure in Section~\ref{sec:measure}.

We see that the right space for the function $\ch_{u, v}$, whose law is defining a stationary measure, is $\CC^\alpha([0,1]) / \{f : f(x) = f(y)\, \forall x, y \in [0,1]\}$ for $\alpha \in (0, \frac{1}{2})$, i.e., the space of H\"{o}lder continuous functions quotient by constant shifts. This space is isomorphic to $\widetilde{\CC}^\alpha([0,1])$, which contains the functions $f \in \CC^\alpha([0,1])$ satisfying $f(0) = 0$. Hence, we will consider stationary measures as the laws of random functions $\ch_{u, v} \in \widetilde{\CC}^\alpha([0,1])$.

The following is the main result of this article, in which we use the strong Feller property, described in detail in Section~\ref{sec:Markov-operators}.

\begin{theorem}\label{thm:main}
For any boundary parameters $u, v \in \R$, the solution to the open
KPZ equation \eqref{eq:openKPZ} on $[0,1]$ is a Markov process on
$\CC^\alpha([0,1])$ for any $\alpha \in (0, \frac{1}{2})$, whose
semigroup enjoys the strong Feller property. Moreover, if either $u + v = 0$ or $u + v > 0$, $\min(u, v) > -1$, then this process has a unique WASEP-stationary measure $\ch_{u,v}$ which is defined in \cite{CorwinKnizel}.
\end{theorem}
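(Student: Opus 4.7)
The plan is to split the statement into three parts---the Markov property, the strong Feller property, and (under the parameter restrictions) uniqueness of the stationary measure---and handle them in that order. The Markov property should follow quickly from the Hopf--Cole well-posedness recalled above: since $Z(t,\cdot)$ is a measurable functional of $Z_0$ and of the white noise $\xi$ restricted to $(0,t]\times[0,1]$, and $\xi$ has independent time increments, the process $Z$ is Markov on the cone $\CC^\alpha_+([0,1])$ of strictly positive H\"older functions. Because $h\mapsto e^h$ is a homeomorphism onto this cone, $h$ inherits the Markov property on $\CC^\alpha([0,1])$, and the constant-shift quotient produces the Markov semigroup $P_t$ on $\widetilde{\CC}^\alpha([0,1])$.

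For the strong Feller property I would work at the level of the SHE semigroup $Q_t$ and transfer the conclusion back via Hopf--Cole, using continuity of $\log$ on the positive cone and of the constant-shift quotient. The goal is a Bismut--Elworthy--Li gradient estimate
\begin{equation*}
\bigl|\nabla_\eta Q_t F(Z_0)\bigr|\le C(t,Z_0)\,\|F\|_\infty\,\|\eta\|_{\CC^\alpha}
\end{equation*}
for all bounded measurable $F$ and all perturbations $\eta$. Linearity of the SHE in $Z$ simplifies the derivative process $\Theta(t,\cdot):=D_\eta Z(t,\cdot)$, which solves the same SHE with initial data $\eta$. The crucial step is a Malliavin integration by parts: identify a Cameron--Martin direction $v_\eta\in L^2([0,t]\times[0,1])$ whose Malliavin derivative reproduces $\Theta$, and rewrite
\begin{equation*}
\nabla_\eta Q_t F(Z_0)=\E\Bigl[F(Z(t,\cdot))\int_0^t\!\!\int_0^1 v_\eta(s,y)\,\xi(ds,dy)\Bigr].
\end{equation*}
The main obstacle here is the construction of $v_\eta$ and uniform $L^2$ control of the resulting Gaussian integral: because the noise in the SHE is multiplicative, the Malliavin covariance degenerates where $Z$ is small, and I would need to invoke strict positivity and quantitative lower bounds on $Z(t,\cdot)$ from the classical theory of SHE on an interval to invert it and bound the shift.

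For uniqueness I would combine the strong Feller property with topological irreducibility of $P_t$: for every $\widetilde h_0\in\widetilde{\CC}^\alpha([0,1])$ and every non-empty open $\mathcal U$ there should exist $t>0$ with $P_t(\widetilde h_0,\mathcal U)>0$. This I would obtain by a controllability argument for the deterministic control system in which $\xi$ is replaced by a smooth forcing $\phi$, coupled with a Stroock--Varadhan support theorem for the SHE with Robin boundary data: given smooth strictly positive reference profiles at times $0$ and $t$, an explicit $\phi\in L^2([0,t]\times[0,1])$ interpolating them is written down from the mild formulation of the SHE using the heat semigroup, and density of smooth positive functions in the cone transfers controllability to general initial and target states. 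With strong Feller and irreducibility in hand, Doob's theorem forces at most one invariant probability measure, which together with the existence of the WASEP-stationary measure from \cite{CorwinKnizel} in the parameter ranges $u+v=0$ and $\{u+v>0,\ \min(u,v)>-1\}$ yields uniqueness.
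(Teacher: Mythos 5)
Your proposal takes a genuinely different route from the paper's, but it has gaps that the paper's route is specifically designed to avoid.

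For the strong Feller property, the paper does not work through Hopf--Cole and the SHE at all: it re-solves the open KPZ equation in the regularity structures framework of \cite{MateBoundary}, and then invokes the abstract strong Feller criterion of \cite[Thm.~4.8]{HairerMattingly}, whose core mechanism is to rewrite a perturbation of the initial data as a perturbation of the noise \emph{at the level of the model} and apply Girsanov. The bulk of Sections~\ref{sec:solution} and~\ref{sec:Feller-regularity} is the verification of Assumptions~6--11 of \cite{HairerMattingly} (renormalization group action, Lipschitz nonlinearities, measurability of the canonical model, form of the renormalized PDE, differentiability of the $F_i$'s). Your Bismut--Elworthy--Li plan for the multiplicative SHE has exactly the obstruction you yourself flag but do not resolve: the diffusion coefficient is $\sigma(Z)=Z$, so the Malliavin covariance scales like $Z^2$ and the BEL shift $v_\eta$ involves $1/Z$. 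Strict a.s.\ positivity of $Z(t,\cdot)$ is known, but what you actually need is quantitative control of weighted integrals of $1/Z$ in $L^2(\Omega)$, and such moment bounds on $1/Z$ for the SHE on an interval with Robin boundary data are not available off the shelf. Without them the gradient estimate you write down is not justified. This is precisely why the paper routes around the SHE and uses the Hairer--Mattingly machinery, where the relevant shift acts on the (Gaussian) noise rather than on the solution and is handled by Girsanov with no inverse of $Z$ appearing.

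For uniqueness, the paper does not prove irreducibility. Instead it uses the much softer statement (Corollary~\ref{cor:uniqueness}, from \cite[Cor.~3.9]{HairerMattingly}) that a strong Feller operator can have at most one invariant measure of full support, and then establishes full support of the WASEP-stationary measure directly from the Radon--Nikodym representation \eqref{eq:RN-deriv} of \cite{BKWW,BK,BLD}, which is absolutely continuous with respect to Brownian motion and has a strictly positive density. This is exactly where the parameter restriction $u+v=0$ or $u+v>0,\ \min(u,v)>-1$ enters: the representation is only proved in that range. Your alternative---irreducibility via a Stroock--Varadhan support theorem for the SHE with Robin boundary data plus controllability, then Doob's theorem---is not in the literature and would be substantial new work. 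It would also, if correct, prove uniqueness for \emph{all} $u,v$, which the paper explicitly states is open; that should be a warning sign that the controllability/support step is not as routine as you suggest. In short: the Markov-property step is fine and matches the paper in spirit, but the strong Feller step has an unaddressed degeneracy obstacle, and the uniqueness step replaces a short full-support argument with an unproved irreducibility claim that would overshoot the known result.
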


\begin{remark}
From uniqueness of the stationary measure and \cite[Thm.~11.11]{DaPratoZabczyk} we conclude that if the parameters $u$ and $v$ are as in the statement of Theorem~\ref{thm:main}, then the law of $\ch_{u, v}$ is ergodic, i.e., the solution $h$ of equation \eqref{eq:openKPZ} satisfies
\begin{equ}
\lim_{T \to \infty} \frac{1}{T} \int_0^T F (h_t - h_t(0)) dt = \E[F(\ch_{u, v})]
\end{equ}
for any $F : \widetilde{\CC}^\alpha([0,1]) \to \R$ such that $\E[F(\ch_{u, v})^2] < \infty$.
\end{remark}

\subsection{Discussion}

The multipliers $\frac{1}{2}$ in \eqref{eq:openKPZ} are needed for
consistency with the previous work \cite{CorwinShen}, and the Feller
property will hold if one replaces them by any constants such that
the constant multiplier of $\partial^2_x h$ is strictly
positive.

The Laplace transform formulas for $\ch_{u,v}$ discovered in
\cite{CorwinKnizel} were inverted: in the mathematics literature it
was done in \cite{BKWW}, while in the physics literature it came in \cite{BLD}. These inversions provide a satisfying probabilistic description for the stationary
measures. Two different representations of the stationary measure were obtained in these papers.

In \cite{BLD} it was shown that $\ch_{u,v}$ is equal to the distribution of
$ W + X$ where $W$ and $X$ are independent stochastic
processes for $u, v>0$. More precisely, the process $W\in \CC([0,1])$ is a Brownian motion of variance $\frac{1}{2}$ and the law of $X\in \CC([0,1])$ is absolutely continuous with respect
to the law of a Brownian motion with variance $\frac{1}{2}$. In the case $u+v=0$, the process $X$ reduces to a Brownian motion with drift and one
sees that $ W + X$ has the law of standard Brownian motion
with drift $u=-v$ (i.e., the law of the random function on $[0,1]$
given by $y\mapsto B(y)+u y$ where $B$ is a Brownian motion with
variance $1$). In \cite{BLD} it was also conjectured that this representation holds for all values of $u$ and
$v$. From
\cite[Prop.~1.4]{BKWW} and \cite[Thm.~1.1]{BK} it follows that this
representation holds for  $u+v\geq 0$ and
$\min(u,v)>-1.$ 

We use these results in our work and it explains where
the restrictions on  $u, v$ in the statement of Theorem \ref{thm:main}
comes from. If this description of the stationary measure is valid for
all values of $u, v$ then our arguments will imply its uniqueness.

Uniqueness of the invariant measure also follows from the One Force -- One Solution principle, which was proved in \cite{Rosati} for the standard KPZ equation and was conjectured in \cite{CorwinKnizel} for the open KPZ equation. 

\begin{conjecture}\label{conj_unique}
Let us fix any $u,v\in \R$ and let $h_0,\tilde h_0\in \CC([0,1])$ be any random functions on the same probability space. Let furthermore $\xi$ be a white noise on $\R \times [0,1]$ on this probability space. Let $h(t,x) = h(t,x;t_0)$ and $\tilde h(t,x) = \tilde h(t,x;t_0)$ denote the solutions to the open KPZ equation \eqref{eq:openKPZ} driven by $\xi$ and started at time $t_0 \leq 0$ at $h_0$ and $\tilde h_0$ respectively. Then the following {\bf One Force -- One Solution principle} holds: for any real $s<s'$ the random functions $(t,x)\mapsto h(t,x)-h(t,0)$ and $(t,x)\mapsto \tilde h(t,x)-\tilde h(t,0)$ converge almost surely in $\CC([s,s'];\widetilde{\CC}([0,1]))$ to the same limit as $t_0\to -\infty$. 
\end{conjecture}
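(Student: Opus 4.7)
The plan is to combine the strong Feller property established in Theorem~\ref{thm:main} with the linear structure afforded by the Hopf--Cole transformation, reducing the almost sure pull-back convergence to a synchronization-by-noise statement for a random cocycle on $\widetilde{\CC}^\alpha([0,1])$. First, I would lift the problem to the stochastic heat equation \eqref{eq:SHE}: for any $t_0 \leq t$, linearity produces a random, a.s.\ strictly positive propagator $\mathcal{K}_{t_0,t}(x,y;\xi)$ such that
\begin{equ}
e^{h(t,x;t_0)} = \int_0^1 \mathcal{K}_{t_0,t}(x,y;\xi)\, e^{h_0(y)}\, dy,
\end{equ}
and which satisfies the pathwise cocycle identity $\mathcal{K}_{t_0,t_2} = \mathcal{K}_{t_1,t_2}\mathcal{K}_{t_0,t_1}$. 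The quotient-level field can then be written as
\begin{equ}
h(t,x;t_0) - h(t,0;t_0) = \log \frac{\int_0^1 \mathcal{K}_{t_0,t}(x,y)\, e^{h_0(y)}\, dy}{\int_0^1 \mathcal{K}_{t_0,t}(0,y)\, e^{h_0(y)}\, dy},
\end{equ}
so the conjecture becomes the statement that the normalized kernel $y \mapsto \mathcal{K}_{t_0,t}(x,y)/\mathcal{K}_{t_0,t}(0,y)$ loses its dependence on the initial datum as $t_0 \to -\infty$, in an a.s.\ sense uniform in $x \in [0,1]$ and $s \leq t \leq s'$.

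Next I would exploit Theorem~\ref{thm:main}. Together with an irreducibility statement that should follow from full support of the space-time white noise and the chaos expansion of \cite{CorwinShen}, Doob's theorem yields uniqueness of the invariant measure $\mu_{u,v}$ (in the specified parameter range) and total-variation convergence $P_t(h_0,\cdot) \to \mu_{u,v}$ for every deterministic $h_0 \in \widetilde{\CC}^\alpha([0,1])$. This is only a distributional statement. To upgrade it to the pathwise convergence the conjecture demands, I would build the random dynamical system $\Phi_{t_0,t}$ associated with \eqref{eq:openKPZ} on the quotient space, and prove the existence of a unique random fixed point via the standard pull-back method: for any bounded set $B \subset \widetilde{\CC}^\alpha([0,1])$ of initial data one wants the diameter of $\Phi_{t_0,\cdot}(B)$ in $\CC([s,s'];\widetilde{\CC}^\alpha([0,1]))$ to shrink to zero almost surely as $t_0 \to -\infty$.

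The principal obstacle is this contraction of the pullback, which amounts to a synchronization-by-noise / negative top Lyapunov exponent for the linearized flow on the quotient space. Since $[0,1]$ is compact, the natural route is a Doeblin-type minorization: establish the existence, for each unit time step, of an a.s.\ positive random constant $c(\xi)$ and a strictly positive reference density $q(x,y)$ such that $\mathcal{K}_{t-1,t}(x,y) \geq c(\xi)\, q(x,y)$ uniformly in $x,y \in [0,1]$. If this holds, the induced Markov chain on the space of polymer endpoint densities satisfies a uniform Doeblin condition, and a Birkhoff-type ergodic theorem yields exponential contraction of the ratio $\mathcal{K}_{t_0,t}(x,\cdot)/\mathcal{K}_{t_0,t}(0,\cdot)$ in a suitable norm, hence the sought a.s.\ convergence. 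Making this minorization quantitative and uniform in the boundary parameters $u, v$ is where I expect the real difficulty to sit; the two natural tools I would try first are the chaos expansion and positivity estimates for the SHE propagator from \cite{CorwinShen}, and a transfer argument from the attractively-coupled WASEP approximation used in \cite{CorwinKnizel}, each of which may require non-trivial extension to handle the Robin boundary conditions for all $u,v \in \R$.
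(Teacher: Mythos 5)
This statement is Conjecture~\ref{conj_unique}, and the paper does \emph{not} prove it. The authors explicitly present it as open: they note that the One Force--One Solution principle was proved by \cite{Rosati} for the \emph{periodic} KPZ equation and was \emph{conjectured} by \cite{CorwinKnizel} for the open equation; in this paper it is recorded as a conjecture and used only as motivation, while the actual uniqueness result (Theorem~\ref{thm:main}) is obtained by a different, distributional route (strong Feller plus full support of the stationary measure), and only for the restricted parameter range $u+v=0$ or $u+v>0$, $\min(u,v)>-1$. So there is no ``paper's own proof'' to compare against, and the first thing to flag is that you are proposing a proof of an open problem.

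On the merits of the proposal itself, the Hopf--Cole reduction, the cocycle identity for the SHE propagator $\mathcal K_{t_0,t}$, and the rewriting of $h(t,\cdot)-h(t,0)$ as a log-ratio of polymer partition functions are all correct and are indeed the standard starting point for pullback/synchronization arguments (this is essentially the framework of \cite{Rosati} in the periodic case). However, the proposal has two genuine gaps. First, the Doeblin-type minorization $\mathcal K_{t-1,t}(x,y)\ge c(\xi)\,q(x,y)$ with a.s.\ positive $c(\xi)$ is asserted, not proved; for the SHE propagator with Robin boundary data and white noise this is exactly the hard analytic step, and merely citing the chaos expansion of \cite{CorwinShen} does not supply the needed pointwise lower bound uniformly in $x,y$ together with moment/tail control on $c(\xi)$. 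Even granting such a minorization, one then needs to show that the resulting contraction in Hilbert's projective metric (or a comparable norm) is strong enough to give almost sure convergence uniformly in $(t,x)\in[s,s']\times[0,1]$, which requires an a.s.\ summability/ergodic estimate for the random contraction rates over unit time blocks; this is again nontrivial and not addressed. Second, the Doob's-theorem branch of the argument, which you invoke to get distributional uniqueness, relies on full support of the invariant measure, and in the paper this is only established for the restricted parameter range stated in Theorem~\ref{thm:main}; for general $u,v\in\R$ full support is itself conjectural (it rests on the representation \eqref{eq:RN-deriv}, which is conjectured in \cite{BLD} to hold for all $u,v$ but is only proved for $u+v\ge 0$, $\min(u,v)>-1$). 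Since Conjecture~\ref{conj_unique} is stated for all $u,v\in\R$, the Doob branch cannot close the argument in that generality, and the minorization branch remains unproved. In short, your outline is a reasonable research programme, but it is a programme, not a proof; the central contraction/synchronization step is exactly what remains open.
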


\subsection{Notation}

We use $\N$ for the set $\{0, 1, 2, \ldots\}$ of non-negative integer numbers. When working on the time-space domain $\R^2$ we use the parabolic scaling $\s = (2, 1)$, so that the distance between two points $z_i = (t_i,x_i) \in \R^2$ is given by
\begin{equ}
\| z_1 - z_2 \|_\s := |t_1 - t_2|^{1/2} + |x_1 - x_2|,
\end{equ}
where $t_i$ and $x_i$ are the time and space variables respectively. We denote by $\star$ the convolution on $\R^2$. 

With a little ambiguity we denote by $\CC^\alpha(\R)$, for $\alpha \in (0,1)$, the closure of smooth functions with respect to the H\"{o}lder norm
\begin{equ}
\| f \|_{\CC^\alpha} := \sup_{x \in \R} |f(x)| + \sum_{x \neq y} \frac{|f(x) - f(y)|}{|x-y|^\alpha}.
\end{equ}

The set $\CB$ contains all functions $\phi \in \CC^2(\R^2)$, supported in the ball of unit radius (with respect to the scaling $\s$), centered at the origin, and satisfying $\| \phi \|_{\CC^2} \leq 1$. For a function $\phi \in \CB$ we will use its rescaling and reentering 
\begin{equ}[eq:rescaled-phi]
\phi_{(t,x)}^\lambda(s, y) := \frac{1}{\lambda^{3}} \phi \left(\frac{s - t}{\lambda^2}, \frac{y - x}{\lambda}\right),
\end{equ}
for the scaling parameter $\lambda \in (0, 1]$.

The space $\CS'(\R^2)$ contains all Schwartz distributions. For $\alpha \in (2, 0)$ we define the space of Besov distributions from $\CC^\alpha$ to contain those distributions $\CS'(\R^2)$ which are obtained by the closure of smooth functions $\zeta: \R^2 \to \R$ with respect to the following system of seminorms: for any compact set $\fK \subset \R^2$ one has
\begin{equ}[eq:Besov]
\sup_{z \in \fK}|\zeta(\phi_{z}^\lambda)| \leq C \lambda^\alpha,
\end{equ}
uniformly over $\lambda \in (0,1]$ and $\phi \in \CB$, and for a
constant $C$ independent of $\lambda$ and $\phi$, but probably
dependent on $\fK$. Here, we write $\zeta(\phi_{z}^\lambda)$ for the
duality pairing $\langle \zeta, \phi_{z}^\lambda \rangle$. We denote
by $\| \zeta \|_{\alpha; \fK}$ the smallest value of the constant $C$
in \eqref{eq:Besov}. We will often use the set $\fK = [-2, T] \times
[0,1]$ with some $T > 0$, for which we simply write $\| \zeta
\|_{\alpha; T} := \| \zeta \|_{\alpha; \fK}$. The advantage to define
the Besov space as a close is in the separability of the space
$\CC^\alpha$. This property is important when using results from
Section~\ref{sec:Markov-operators}.
\subsection{Structure of the paper}

In Section~\ref{sec:measure} we describe the stationary measure for
the open KPZ equation, constructed in \cite{CorwinKnizel}, as well as
its representations in
subsequent works. The solution to the open KPZ equation in the
framework of regularity structures is provided in
Section~\ref{sec:solution}, and in Section~\ref{sec:Feller-regularity}
we use the results of \cite{HairerMattingly} to show that the Markov
semigroup of this solution enjoys the strong Feller property. The main
result of this article, Theorem~\ref{thm:main}, is proved in
Section~\ref{sec:proof-main}.

\subsubsection*{Acknowledgements}
A.~Knizel was partially supported by NSF grant DMS-2153958. K.~Matetski was partially supported by NSF grant DMS-19538.
The authors wish to thank Ivan Corwin for numerous fruitful discussions.

\section{A stationary measure}
\label{sec:measure}

Existence of a stationary measure for the open KPZ equation
\eqref{eq:openKPZ} was proved in \cite{CorwinKnizel}, for any boundary
parameters $u, v \in \R$. It was done by proving tightness of the
stationary measures of a properly rescaled open weakly asymmetric
simple exclusion process, and combining it with the convergence of the
latter to the solution of \eqref{eq:openKPZ}, proved in
\cite{CorwinShen, Parekh}. The following result contains only those parts of \cite[Thm.~1.4]{CorwinKnizel}, which are relevant for this article. 

\begin{theorem}\label{thm:WASEP-measure}
For any boundary parameters $u, v \in \R$, there is a random function $\ch_{u, v} \in \widetilde{\CC}^\alpha([0,1])$, for any $\alpha < \frac{1}{2}$, whose distribution is stationary for the open KPZ equation \eqref{eq:openKPZ}. Moreover, in particular cases it can be characterized as follows:
\begin{enumerate}\setlength\itemsep{0em}
\item In the case $u + v = 0$ one has $\ch_{u, v}(x) = B(x) + u x$, where $B$ is a standard Brownian motion.
\item In the case $u + v > 0$ we define 
\begin{equ}
\mathsf{C}_{u, v} := \begin{cases} 2 &\text{if}~~u \leq 0 \text{~~or~~} u \geq 1\,, \\ 2 u &\text{if}~~u \in (0,1)\,. \end{cases}
\end{equ}
Then the distribution of $\ch_{u, v}$ is uniquely characterized by the following exact formula for its multi-point Laplace transform:
\begin{equ}
\E \biggl[\exp\biggl(-\sum_{k=1}^d c_k \ch_{u, v}(x_k)\biggr)\biggr] = \frac{\E \Bigl[\exp\Bigl(\frac{1}{4} \sum_{k=1}^{d+1} (s_k^2 - \T_{s_k}) (x_k - x_{k-1})\Bigr)\Bigr]}{\E[-\T_0 / 4]},
\end{equ}
for any $d \in \N$, any $0 = x_0 < x_1 < \cdots < x_d \leq x_{d+1} = 1$, any $c_1, \ldots, c_d > 0$ such that $c_1 + \cdots + c_d < \mathsf{C}_{u, v}$, and for the values $s_k = c_k + \cdots + c_{d}$ and $s_{d+1} = 0$. Here, $\T_s$ is {\bf the continuous dual Hahn process}, defined in \cite{CorwinKnizel}.
\end{enumerate}
\end{theorem}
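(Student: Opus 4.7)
The plan is to prove the three assertions separately: existence by a tightness argument on stationary measures of the open WASEP limiting to the KPZ equation, case (1) by a direct stationarity check via Cole--Hopf, and case (2) by computing the Laplace transform on the WASEP side and passing to the weakly asymmetric limit.

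For existence, I would follow the scheme of \cite{CorwinKnizel}. The open weakly asymmetric simple exclusion process on $\{1,\dots,N\}$, with boundary reservoir rates tuned to $(u,v)$, admits an explicit matrix-product-ansatz stationary measure. Under the weakly asymmetric rescaling (parabolic scaling together with a height renormalization) its height function converges to the Hopf--Cole solution of \eqref{eq:openKPZ}--\eqref{eq:boundary}, a result of \cite{CorwinShen, Parekh}. The key step is to extract from the matrix-product formulas quantitative H\"older estimates, uniform in $N$, giving tightness of the rescaled stationary measures in $\widetilde{\CC}^\alpha([0,1])$ for every $\alpha<\tfrac12$. Any subsequential weak limit $\ch_{u,v}$, combined with the semigroup convergence, must be stationary for \eqref{eq:openKPZ}. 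The main obstacle is precisely this uniform H\"older control, which relies on sharp combinatorial estimates on the matrix product expressions.

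For part (1), when $u+v=0$ one has to check that $x\mapsto B(x)+ux$, with $B$ a standard Brownian motion, is stationary under \eqref{eq:openKPZ} with boundary parameters $(u,-u)$. The cleanest route is Cole--Hopf: the candidate stationary law for the multiplicative stochastic heat equation \eqref{eq:SHE} is the law of $Z(x)=e^{B(x)+ux}$, and a short It\^o computation verifies that the drift from $\tfrac12\partial_x^2 Z$, the It\^o correction of the noise, and the Robin coefficients $u-\tfrac12$ at $0$ and $-u-\tfrac12$ at $1$ cancel pairwise in the generator, so the induced law on $\widetilde{\CC}^\alpha([0,1])$ is invariant. Pulling back through the logarithm yields stationarity of $B(x)+ux$. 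Alternatively, at the discrete level Bernoulli product measures at density $\tfrac12$ are stationary for the open WASEP with the symmetric choice of boundary rates, and their diffusive limit is exactly Brownian motion with drift $u$, which identifies the limit in the previous paragraph.

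For part (2), when $u+v>0$, I would compute the multi-point Laplace transform of the matrix-product WASEP stationary measure and pass to the weakly asymmetric scaling limit. The transform has an exact representation as a finite sum that can be rewritten as a contour integral against Askey--Wilson weights; the constraint $c_1+\cdots+c_d<\mathsf{C}_{u,v}$ is precisely the range for which these integrals remain convergent in the limit. The limiting contour integral is then identified with the stated expectation involving the continuous dual Hahn process $\T_s$ by recognising its spectral measure inside the integrand. Uniqueness of the characterization follows because the admissible parameters $(c_k)$ form a family separating probability laws on $\widetilde{\CC}^\alpha([0,1])$. The principal technical difficulty, again, is the uniform control of these combinatorial and integral expressions under the WASEP-to-KPZ scaling, which is where the core of \cite{CorwinKnizel} is concentrated.
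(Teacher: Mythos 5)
This theorem is not proved in the paper you are comparing against: the authors state explicitly that it ``contains only those parts of \cite[Thm.~1.4]{CorwinKnizel}, which are relevant for this article'' and import it verbatim, giving only a one-sentence description of the proof strategy (tightness of stationary laws for the rescaled open WASEP combined with the WASEP-to-KPZ convergence of \cite{CorwinShen,Parekh}). Your sketch reconstructs that external argument and is consistent with the strategy the paper attributes to \cite{CorwinKnizel}, so at the level of the present paper there is simply no internal proof to match against.

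Two small cautions on the sketch itself. In part (1) the ``direct It\^{o}/generator computation'' for $Z=e^{B(x)+ux}$ cannot be taken literally: the candidate stationary profile is nowhere differentiable, so the inhomogeneous Robin boundary conditions for \eqref{eq:SHE} only hold in a weak or renormalized sense, and the pairwise cancellation you describe has to be interpreted accordingly; the discrete route you offer as an alternative (stationary open WASEP measures converging to Brownian motion with drift) is the one that actually carries the weight. In part (2), the claim that the admissible $(c_k)$ with $c_1+\cdots+c_d<\mathsf{C}_{u,v}$ automatically separate laws deserves a word of justification (it works because the constraint contains an open neighborhood of the origin in $(0,\infty)^d$ for every $d$, which is enough to invoke uniqueness from the multivariate Laplace transform), but as written it is an assertion rather than an argument.

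Finally, note that the uniqueness aspect of the Laplace-transform characterization, together with the subsequent representation \eqref{eq:invariant-representation}--\eqref{eq:RN-deriv}, is exactly what feeds into Proposition~\ref{prop:support} and hence into the uniqueness part of Theorem~\ref{thm:main}; the paper deliberately treats all of this as a black box from \cite{CorwinKnizel,BKWW,BK,BLD}, and a reader of this paper is not expected to re-derive it.
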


We will use the following representation of the stationary measure
obtained in
\cite{BKWW, BK, BLD}.

\begin{theorem}
Under the assumption $u + v>0$ and $\textup{min}(u, v) >-1$ the stationary measure
of the open KPZ equation \eqref{eq:openKPZ} on $[0,1]$ can be represented
in the form
\begin{equ}[eq:invariant-representation]
(\ch_{u, v}(x))_{x\in[0, 1]} \stackrel{\text{\tiny law}}{=} \bigl(W(x)+Y(x)-Y(0)\bigr)_{x\in [0, 1]},
\end{equ}
where $W$ is a Brownian motion of variance $\frac{1}{2}$ and
$(Y(x))_{x\in [0,1]}$
is an
$\R$-valued Markov process, such that the Radon-Nikodym derivative of the law of $(Y(x)-Y(0))_{x\in [0, 1]}$ with respect to
the law of the Brownian motion $W$ is given by
\begin{equ}[eq:RN-deriv]
\dfrac{d \P_Y}{d \P_W} (\beta)=\frac{1}{\CZ} e^{-2 v\beta(1)}\biggl(\int\limits_0^1e^{-2 \beta(x)} d x\biggr)^{-(u + v)}
  \end{equ}
with the argument of the density function denoted by
$\beta \in \CC([0,1])$ and an explicit positive normalization constant
$\CZ$.
\end{theorem}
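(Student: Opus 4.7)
The proof is essentially a compilation of already established results, so my plan is to reduce the statement to the known characterizations in \cite{BKWW, BK, BLD} and match them with the formulation here. The starting point is the uniqueness statement in Theorem~\ref{thm:WASEP-measure}: in the regime $u+v>0$, the law of $\ch_{u,v}$ is characterized uniquely by its multi-point Laplace transform, expressed in terms of the continuous dual Hahn process $\T_s$. Thus it suffices to exhibit a process with the prescribed law $W + (Y - Y(0))$ whose multi-point Laplace transform matches that formula.

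First, I would recall the Laplace inversion performed in \cite[Prop.~1.4]{BKWW}, which starts from the formula of Theorem~\ref{thm:WASEP-measure}(2) and rewrites it as the Laplace transform of a sum of two independent processes. The natural decomposition that comes out of this inversion is precisely into a Brownian motion $W$ of variance $\frac{1}{2}$ and a second process $X$ whose law is absolutely continuous with respect to that of $W$. Independence of $W$ and $X$ is built into the inversion since the transform factors across the convolution structure.

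Next, I would match the Radon-Nikodym derivative \eqref{eq:RN-deriv} with the density identified in \cite{BKWW, BLD}. Writing $X = Y - Y(0)$ with $Y$ a Markov process started at $Y(0)$, the density of the law of $X$ with respect to Brownian motion of variance $\frac{1}{2}$ is, up to normalization, $e^{-2 v \beta(1)}\bigl(\int_0^1 e^{-2\beta(x)}\,dx\bigr)^{-(u+v)}$. The Markov property of $Y$ follows from the fact that this density depends on $\beta$ only through the terminal value $\beta(1)$ and a time-additive functional $\int_0^1 e^{-2\beta(x)}\,dx$, which can be incorporated via an $h$-transform of Brownian motion; this is the viewpoint taken in \cite{BK}.

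Finally, the extension from the range $u, v > 0$ originally considered in \cite{BLD} to the full range $u+v > 0$, $\min(u,v) > -1$ stated here relies on \cite[Thm.~1.1]{BK}, which verifies that the integrability of $\bigl(\int_0^1 e^{-2\beta(x)}\,dx\bigr)^{-(u+v)}$ against the exponential tilt $e^{-2 v \beta(1)}$ under Wiener measure remains finite, so that $\CZ < \infty$ and \eqref{eq:RN-deriv} defines a bona fide probability measure in this larger range. The main point to check is this integrability near the endpoints of the parameter range — this is where the condition $\min(u,v) > -1$ enters, precisely because it controls the singularity of the inverse integral functional. With this in hand, the matching of Laplace transforms combined with uniqueness from Theorem~\ref{thm:WASEP-measure}(2) yields the claimed representation.
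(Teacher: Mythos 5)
The paper gives no proof of this theorem --- it simply states the result and notes ``The proof of this result in \cite{BKWW, BK} relies on the Laplace transform formula from \cite{CorwinKnizel} and uses the computations from \cite{BLD}, where this representation was first suggested.'' Your proposal takes the same route: reduce to the cited works, and you correctly identify the logical flow (uniqueness via the Laplace transform of Theorem~\ref{thm:WASEP-measure}(2), inversion and decomposition from \cite[Prop.~1.4]{BKWW}, extension of the parameter range from \cite[Thm.~1.1]{BK}). So the approach matches.

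One small imprecision worth flagging: your argument for the Markov property of $Y$ invokes an $h$-transform of Brownian motion, but the weight $e^{-2v\beta(1)}\bigl(\int_0^1 e^{-2\beta(x)}\,dx\bigr)^{-(u+v)}$ is not an exponential of an additive functional, so the tilted law is not a Feynman--Kac transform in the usual sense. What is true is that the pair $\bigl(\beta(x), \int_0^x e^{-2\beta(y)}\,dy\bigr)$ is Markov and the weight is a function of its terminal value, which makes the \emph{augmented} process Markov after tilting; that $Y$ alone is Markov (a fact related to Matsumoto--Yor--type identities for exponential functionals) is a nontrivial structural result verified by explicit construction in \cite{BK, BLD}, not a direct consequence of the shape of the density. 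Since the paper defers entirely to the cited references this is not a gap in context, but if you were writing a self-contained proof it would need more care.
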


The proof of this result in \cite{BKWW, BK}
  relies on the Laplace transform formula from \cite{CorwinKnizel} and uses the
  computations from \cite{BLD}, where this representation was first
  suggested. 
  
  Formula \eqref{eq:RN-deriv} in particular imply
  that the stationary distribution of the open KPZ equation is
  absolutely continuous with respect to the law of the Brownian
  motion. It is conjectured in \cite{BLD} that \eqref{eq:RN-deriv} is
  valued for all values of $u,v.$

\section{Properties of Markov operators}
\label{sec:Markov-operators}

In this section we collect several properties of Markov operators which guarantee uniqueness of invariant measures. More information on the topic can be found in \cite{Sinai, DPZ-Ergodicity}.

Let $\CU$ be a separable Banach space (in particular, it is Polish) and let $\fP$ be a Markov operator on $\CU$. It means that $\fP$ maps bounded measurable functions on $\CU$ to bounded measurable functions.

\begin{definition}
$\fP$ enjoys the \emph{strong Feller property} if for any bounded measurable function $\Psi : \CU \to \R$, the function $\fP \Psi : \CU \to \R$ is bounded and continuous with respect to the supremum norm.
\end{definition}

It $\fP$ enjoys the strong Feller property we will simply say that $\fP$ is a strong Feller operator. We are interested in \emph{invariant} measures of $\fP$, i.e., such measures $\mu$ on $\CU$ which satisfy 
\begin{equ}
\int_\CU (\fP \Psi)(u) \mu (d u) = \int_\CU \Psi(u) \mu (d u),
\end{equ}
for any bounded measurable function $\Psi : \CU \to \R$. A special role is played by ergodic invariant measures, which are roughly speaking those measures for which the law of large numbers hold (see Birkhoff-Khinchin ergodic theorem \cite{Sinai}).

\begin{definition}
An invariant probability measure $\mu$ for $\fP$ is \emph{ergodic} if for every set $A \subset \CU$, satisfying $(\fP \1_A)(u) = 1$ for all $u \in A$, one has $\mu(A) \in \{0, 1\}$. Here, we use the indicator function $\1_A(u) = 1$ if $u \in A$ and $\1_A(u) = 0$ otherwise.
\end{definition}

We recall that the \emph{topological support} $\supp(\mu)$ of $\mu$
consists of those points $u \in \CU$ such that $\mu(A) > 0$ for every
open neighborhood $A \subset \CU$ of $u$. The following result can be found in \cite{DPZ-Ergodicity}.

\begin{proposition}
Let $\fP$ be strong Feller and let $\mu \neq \nu$ be two ergodic invariant probability measures for $\fP$. Then $\supp(\mu) \cap \supp(\nu) = \emptyset$.
\end{proposition}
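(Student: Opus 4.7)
The plan is to combine the classical fact that distinct ergodic invariant measures are mutually singular with the strong Feller hypothesis, which will let us upgrade an almost-everywhere identity to a pointwise identity on the topological supports.

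First I would argue that $\mu$ and $\nu$ are mutually singular. Since $\mu \neq \nu$, there exists a bounded measurable $\Psi : \CU \to \R$ with $\int \Psi d\mu \neq \int \Psi d\nu$. Applying the Birkhoff ergodic theorem for the Markov chain associated with $\fP$ separately under the two ergodic invariant measures, the Cesàro averages $\frac{1}{n}\sum_{k=0}^{n-1} \Psi(X_k)$ converge almost surely to $\int \Psi d\mu$ starting from $\mu$-almost every point and to $\int \Psi d\nu$ starting from $\nu$-almost every point. The level set where the limit equals $\int \Psi d\mu$ then provides a measurable set $A \subset \CU$ with $\mu(A)=1$ and $\nu(A)=0$.

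Next I would set $f := \fP \1_A$. Since $\1_A$ is bounded and measurable, the strong Feller property yields that $f$ is bounded and continuous on $\CU$; moreover $0 \leq \1_A \leq 1$ gives $0 \leq f \leq 1$. By invariance of $\mu$ and $\nu$ one has
\begin{equ}
\int_\CU f\, d\mu = \mu(A) = 1, \qquad \int_\CU f\, d\nu = \nu(A) = 0,
\end{equ}
which, together with the pointwise bounds on $f$, forces $f = 1$ $\mu$-almost surely and $f = 0$ $\nu$-almost surely.

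Finally, I would extend these identities to the supports using continuity. If $u \in \supp(\mu)$ had $f(u) \neq 1$, then continuity would give an open neighborhood $V$ of $u$ on which $f \neq 1$, and since $\mu(V) > 0$ by definition of support, this would contradict $f = 1$ $\mu$-a.s. Hence $f \equiv 1$ on $\supp(\mu)$ and, by the same argument, $f \equiv 0$ on $\supp(\nu)$. Since no point can simultaneously satisfy $f(u)=1$ and $f(u)=0$, we conclude $\supp(\mu) \cap \supp(\nu) = \emptyset$. The only delicate step is Step 1, where mutual singularity must be extracted from ergodicity; once this is in hand, the strong Feller property does the rest by turning the separating indicator function into a continuous separator after one application of $\fP$.
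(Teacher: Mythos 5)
Your argument is correct, and it reconstructs exactly the two-step chain that the paper points to through its references rather than proving itself: the Birkhoff/Markov ergodic theorem gives mutual singularity of distinct ergodic invariant measures (the role of \cite[Ch.~I.1.2]{Sinai}), and then the strong Feller property turns a separating indicator into a continuous function $f=\fP\1_A$ that is identically $1$ on $\supp(\mu)$ and $0$ on $\supp(\nu)$ (the role of \cite{DPZ-Ergodicity}). One cosmetic imprecision: since the Ces\`aro limit is a priori random, the set $A$ should be defined as $\{u\in\CU : \P_u\bigl(\lim_{n\to\infty}\tfrac1n\sum_{k<n}\Psi(X_k)=\int\Psi\,d\mu\bigr)=1\}$ rather than as a ``level set'' of the limit, but the intended object is clear and the rest of the argument goes through unchanged.
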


This result is usually stated for any mutually singular invariant probability measures $\mu$, $\nu$. Ergodicity and $\mu \neq \nu$ guarantee mutual singularity (see Theorem~2 in \cite[Ch.~I.1.2]{Sinai}).

The next result follows immediately from the preceding proposition, which can be also found in \cite[Cor.~3.9]{HairerMattingly}.

\begin{corollary}\label{cor:uniqueness}
Let $\fP$ be strong Feller with an invariant measure $\mu$, satisfying $\supp(\mu) = \CU$. Then $\mu$ is the only invariant measure. 
\end{corollary}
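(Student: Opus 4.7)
The plan is to argue by contradiction: assume there is a second invariant probability measure $\nu$ with $\nu \neq \mu$. The strategy is to produce two distinct ergodic invariant measures whose supports must be disjoint by the preceding proposition, which will contradict $\supp(\mu) = \CU$.

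The first step is to verify that $\mu$ itself must be ergodic. If not, then by the ergodic decomposition theorem on the Polish space $\CU$ one obtains a non-trivial representation of $\mu$ as a mixture of ergodic invariant measures; in particular, there exist distinct ergodic invariants $\mu_1$ and $\mu_2$ and some $\alpha \in (0, 1)$ such that $\mu = \alpha \mu_1 + (1-\alpha) \mu_2$. The preceding proposition yields $\supp(\mu_1) \cap \supp(\mu_2) = \emptyset$, while an elementary check shows $\supp(\mu) = \supp(\mu_1) \cup \supp(\mu_2)$. Since $\CU$ is a Banach space and hence connected, it cannot be partitioned into two disjoint non-empty closed sets, so this gives a contradiction.

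Knowing that $\mu$ is ergodic, I would take any other invariant probability measure $\nu$ and apply ergodic decomposition to it. If every ergodic component appearing in this decomposition coincided with $\mu$, then $\nu = \mu$ would follow, so there must exist an ergodic invariant measure $\nu'$ with $\nu' \neq \mu$. Applying the preceding proposition to the two distinct ergodic measures $\mu$ and $\nu'$ then yields $\supp(\mu) \cap \supp(\nu') = \emptyset$, which together with $\supp(\mu) = \CU$ forces $\supp(\nu') = \emptyset$, impossible for a probability measure.

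The hard part is not really conceptual, since the preceding proposition does the heavy lifting; the corollary is essentially immediate from it. The residual bookkeeping amounts to invoking ergodic decomposition on the Polish space $\CU$ to produce ergodic components, verifying the easy identity $\supp(\alpha \mu_1 + (1-\alpha)\mu_2) = \supp(\mu_1) \cup \supp(\mu_2)$, and using the connectedness of the Banach space $\CU$ to rule out a partition into two disjoint non-empty closed sets.
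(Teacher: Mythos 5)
Your overall architecture is sound: show $\mu$ is ergodic via connectedness of $\CU$, then show no other ergodic invariant measure can exist, then use ergodic decomposition to conclude. The paper itself supplies no proof (it cites the result to Hairer--Mattingly, Cor.~3.9), but your plan matches the expected one. However, there is a genuine gap in the first step.

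You assert that a non-ergodic invariant measure $\mu$ admits a decomposition $\mu = \alpha\mu_1 + (1-\alpha)\mu_2$ with $\mu_1,\mu_2$ distinct \emph{ergodic} invariant measures. That is false in general: the ergodic decomposition is a barycenter $\mu = \int e\, d\pi(e)$ over the (possibly uncountable) set of ergodic measures, and a non-ergodic measure is usually not a finite convex combination of ergodic ones. The extreme-point characterization only gives you $\mu = \alpha\mu_1 + (1-\alpha)\mu_2$ with $\mu_1,\mu_2$ invariant (not ergodic), so your invocation of the preceding proposition --- which as stated in the paper requires \emph{ergodic} measures --- does not directly apply.

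The fix is minor and uses a remark the paper itself makes. If $\mu$ is not ergodic, pick an invariant set $A$ with $0 < \mu(A) < 1$ and let $\mu_1 = \mu(\cdot \mid A)$, $\mu_2 = \mu(\cdot \mid A^c)$; these are invariant, mutually singular, and satisfy $\mu = \mu(A)\mu_1 + \mu(A^c)\mu_2$, so your identity $\supp\mu = \supp\mu_1 \cup \supp\mu_2$ still holds. The paper notes immediately after the proposition that it ``is usually stated for any mutually singular invariant probability measures,'' and it is this stronger form that you need here: it gives $\supp\mu_1 \cap \supp\mu_2 = \emptyset$, and then connectedness of the Banach space $\CU$ rules out the resulting partition of $\CU$ into two nonempty disjoint closed sets. (Alternatively, one can argue ergodicity of $\mu$ directly: $\fP\1_A$ is continuous, equals $\1_A$ $\mu$-a.e., hence is $\{0,1\}$-valued everywhere by full support, hence $\fP\1_A = \1_B$ for a clopen $B$ with $\mu(B)\in(0,1)$, again contradicting connectedness --- this avoids ergodic decomposition in the first step entirely.) Your second step, producing an ergodic component $\nu'\neq\mu$ of any invariant $\nu\neq\mu$ and deriving $\supp\nu' = \emptyset$, is correct as written.
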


This corollary is the main result which we are going to use to prove uniqueness of the invariant measure of the open KPZ equation. Namely, we will prove that the solution is a Markov process whose transition semigroup enjoys the strong Feller property. Knowing then from \eqref{eq:RN-deriv} that an invariant measure is absolutely continuous with respect to the Brownian motion, we conclude that it has full support, and hence it is unique. 

\subsection{Comments on proving the strong Feller property}
\label{sec:how-to-prove}

In order to prove Theorem~\ref{thm:main}, we need to prove that for any $t > 0$, the transition semigroup $\fP_t$ of the solution to the open KPZ equation \eqref{eq:openKPZ} at time $t$ is strong Feller on a suitable Banach space $\CU$. We will do it by describing the evolution of the solution by a family of maps
\begin{equ}
\Phi_{s, t} : \CU \times \CM \to \CU, \qquad 0 \leq s \leq t \leq 1,
\end{equ}
so that given a solution $h$ at time $s$, the map $\Phi_{s, t}(h, \bxi)$ gives the solution at time $t$. Here, $\bxi$ is a noise taking values in a complete separable metric space $\CM$. In our case $\bxi$ will be a model (in the sense of the theory of regularity structures) constructed from the driving noise $\xi$ in \eqref{eq:openKPZ}. Of course, the maps $\Phi$ should be compatible, allowing us to iterate the solution, i.e., they should satisfy 
\begin{equ}
\Phi_{r, t} \bigl(\Phi_{s, r}(h, \bxi), \bxi\bigr) = \Phi_{s, t}(h, \bxi),
\end{equ}
for any $r \in [s,t]$ and any $(h, \bxi) \in \CU \times \CM$. 

As proved in \cite[Thm.~3.2]{HairerMattingly}, the two key results for $\Phi$, from which the strong Feller property of $\fP$ follows, are:
\begin{enumerate}\setlength\itemsep{0em}
\item For each $\bxi \in \CM$, the map $h \mapsto \Phi_{s, t}(h, \bxi)$ is Fr\'{e}chet differentiable on $\CU$.
\item Shifts of the initial data $h$ can be written in terms of shifts of the noise $\bxi$, and the Fr\'{e}chet derivatives of $\Phi$ with respect to these two shifts are linearly dependent.
\end{enumerate}
Then a perturbation of the initial data can be written in terms of a perturbation of the noise, while the latter can be analyzed using the Girsanov theorem. 

We refer to \cite[Sec.~2]{HairerMattingly} for a precise assumptions on the maps $\Phi$, which in particular allow their blow-ups in finite times. In Section~\ref{sec:Feller-regularity} we provide particular cases of these assumptions, which are sufficient to prove the strong Feller property for the transition semigroup of the solution of the open KPZ equation. However, in order to check these assumptions, we need to solve equation \eqref{eq:openKPZ} in the framework of regularity structures. This is what we do in the following section, in which we recall some of the results from \cite{MateBoundary}.

\section{The solution to the open KPZ equation}
\label{sec:solution}

We recall in this section how the solution to the open KPZ equation \eqref{eq:openKPZ} was constructed in \cite[Sec.~6.3]{MateBoundary}. We prefer to provide enough details, in order to be able to check transparently the assumptions stated in Section~\ref{sec:Feller-regularity}, which guarantee that the transition semigroup of the solution enjoys the strong Feller property. In order to keep the exposition more easy, we prefer to use the minimal regularity structure, which contains only those elements that are required to solve the equation.

\subsection{A mild form of the open KPZ equation}

It follows from \cite[Thm.~1.7]{MateBoundary} that if we use the theory of regularity structures to solve equation \eqref{eq:openKPZ}, then after reconstruction the boundary values $u$ and $-v$ get shifter by the constant 
\begin{equ}[eq:a]
a := \int_{\R^2} (\bar \rho * \rho)(s, y) \Bigl( \frac{1}{2} - \frac{1}{2} \mathrm{Erf} \Bigl( \frac{|y|}{\sqrt{2|s|}}\Bigr) - 2|y| \CN(s, y) \Bigr) ds dy,
\end{equ}
where the function $\rho : \R^2 \to \R$ is used to mollify the noise, $\bar \rho(s, y) = \rho(-s, -y)$, $\CN$ is the heat kernel and $\mathrm{Erf}$ is the Gauss error function. This is why, in order to obtain the required equation \eqref{eq:openKPZ} after reconstruction, we need to consider the shifted boundary conditions 
\begin{equ}[eq:buondary-new]
\partial_x h(t,x) \Big|_{x = 0}= u + a, \qquad \partial_x h(t,x) \Big|_{x = 1}= -v - a.
\end{equ}
It will be advantageous to write the open KPZ equation in a mild form. This is however much easier to do in the case of zero boundary condition, because we can write an explicit formula for the Neumann heat kernel. We formally observe that if $h$ is the solution of \eqref{eq:openKPZ} with the boundary conditions \eqref{eq:buondary-new}, then the transformed function 
\begin{equ}[eq:from-h-to-tilde-h]
\tilde{h}(t,x) = h(t,x) + \frac{v + a}{2} x^2 - (u + a) x
\end{equ}
solves the SPDE
\begin{equ}[eq:openKPZ-new]
\partial_t \tilde{h} = \frac{1}{2} \partial^2_x \tilde{h} + \frac{1}{2} (\partial_x \tilde{h})^2 + a_1 \partial_x \tilde{h} + a_2 + \xi, \qquad \tilde{h}(0,\cdot) = \tilde{h}_0(\cdot),
\end{equ}
with the initial condition $\tilde{h}_0(x) = h_0(x) + \frac{v + a}{2} x^2 - (u + a) x$ and with Neumann boundary conditions
\begin{equ}[eq:boundary-new]
\partial_x \tilde{h}(t,x) \Big|_{x = 0} = \partial_x \tilde{h}(t,x) \Big|_{x = 1} = 0.
\end{equ}
The polynomials $a_1$ and $a_2$ in \eqref{eq:openKPZ-new} depend only on the spatial variable and are given by
\begin{equ}[eq:a-functions]
a_1(x) := 2 \bigl(- 2 (v + a) x + u + a\bigr), \qquad a_2(x) := \frac{1}{2} \bigl(\bigl(-2 (v + a) x + u + a\bigr)^2  -v - a\bigr).
\end{equ}
The derivation of equation \eqref{eq:openKPZ-new} is of course correct of the driving noise $\xi$ is smooth and the equation can be solved in the classical sense. Formally, equation \eqref{eq:openKPZ-new} can be written in the mild form 
\begin{equs}[eq:openKPZ-mild]
&\tilde{h}(t,x) = \int_0^1 \widetilde{P}_{t}(x, y) \tilde{h}_0(y) d y + \frac{1}{2} \int_0^1 \int_{0}^t \widetilde{P}_{t-s}(x,y) \bigl(\partial_y \tilde{h}(s, y)\bigr)^2 d s\, d y \\
&\quad + \int_0^1 \int_{0}^t \widetilde{P}_{t-s}(x,y) \bigl( a_1(y) \partial_y \tilde{h}(s, y) + a_2(y)\bigr) d s\, d y + \int_0^1 \int_{0}^t \widetilde{P}_{t-s}(x,y) \xi(d s, d y),
\end{equs}
where $\widetilde{P}_{t}: [0, 1]^2 \to \R$ is the \emph{Neumann heat kernel}, which is characterized as the unique solution of the PDE
\begin{equ}
\partial_t \widetilde{P}_{t}(x, y) = \frac{1}{2} \partial^2_x \widetilde{P}_{t}(x, y), \qquad \widetilde{P}_{0}(x, y) = \delta_{x - y},
\end{equ}
for $t > 0$, $x \in (0,1)$ and $y \in [0,1]$ with the boundary conditions $\partial_x \widetilde{P}_{t}(0, y) = \partial_x \widetilde{P}_{t}(1, y) = 0$ for all $t > 0$ and $y \in [0,1]$. Let $P_t(x) = (4 \pi t)^{-\frac{1}{2}} e^{- x^2 / (4 t)}$ be the heat kernel on $x \in \R$. Then one can readily check that the Neumann heat kernel is given by 
\begin{equ}
\widetilde{P}_{t}(x,y) = \sum_{m \in \Z} \Bigl( P_t(x + y + 2m ) + P_t(x - y + 2 m) \Bigr).
\end{equ}

In the rest of this article we are going to work with equation \eqref{eq:openKPZ-mild}. In particular, we will solve it using the framework of regularity structures and then define the solution of the initial equation \eqref{eq:openKPZ} by the identity \eqref{eq:from-h-to-tilde-h}. We will demonstrate that our solution coincides with the one defined in \cite{MateBoundary}, and hence it coincides with the Hopf-Cole solution defined in \cite{CorwinShen}.

\subsection{A regularity structure}
\label{sec:RS}

We consider the set $\CW_\poly$, containing the monomials $X^\ell = X_0^{\ell_0} X_1^{\ell_1}$ for all $\ell = (\ell_0, \ell_1) \in \N^2$ with assigned degrees $\deg X^\ell = 2 \ell_0 + \ell_1$. Here, the dummy variables $X_0$ and $X_1$ correspond to the time and space variables respectively, and we write $\one$ for the unit element $X^0$ of degree $\deg \one = 0$. Note that we use the parabolic scaling $\s = (2,1)$ of $\R^2$ to compute degrees of the monomials, where $2$ is the scaling of time, and $1$ is the scaling of space. We use this particular scaling, since it corresponds to the parabolic time-space scaling induced by the linear part of equation \eqref{eq:openKPZ-new}.

The regularity structure of the open KPZ equation is the same as for the usual KPZ equation, and it was constructed in \cite{FrizHairer}. Namely, we define the smallest collections $\CU$ and $\CV$ of all formal expression, such that $\CW_\poly \in \CU \cap \CV$, $\Xi \in \CV$, and such that the implications hold
\begin{equs}
\tau \in \CV \setminus \CW_\poly \quad &\Rightarrow \quad \CI(\tau) \in \CU, \label{eq:rule1}\\
\tau_1, \tau_2 \in \CU \setminus \{\one\},~ \{\tau_1, \tau_2\} \not\subset \CW_\poly \quad &\Rightarrow \quad \partial \tau_1 \partial \tau_2 \in \CV. \label{eq:rule2}
\end{equs}
The symbol $\Xi$ corresponds to the noise $\xi$ in \eqref{eq:openKPZ-mild}, the symbol $\CI$ corresponds to the convolution with the hear kernel, and $\partial$ corresponds to a spatial derivative. We make the assumption that the product in \eqref{eq:rule2} is commutative, associative, with the unit $\one$. We also postulate the standard differentiation rules for monomials $\partial \one = 0$ and $\partial X^{(\ell_0, \ell_1)} = \ell_1 X^{(\ell_0, \ell_1-1)}$ for $\ell_1 \geq 1$. And it will be convenient to postulate $\CI(X^\ell) = 0$ for all $\ell \in \N^2$. We note that $\CV$ contains all elements $\d \tau$ such that $\tau \in \CU$, because they appear in \eqref{eq:rule2} as $\partial X_1 \partial \tau$. Respectively, $\CV$ contains the elements $2 X_1 \d \tau$ and $3 X_1^2 \d \tau$ for $\tau \in \CU$, because they come in \eqref{eq:rule2} from the expressions $\partial X^2_1 \partial \tau$ and $\partial X^3_1 \partial \tau$ respectively.  Then the set $\CV$ contains the elements corresponding to those appearing on the right-hand side of \eqref{eq:openKPZ-new}, while $\CU$ contains those which appear after integration in \eqref{eq:openKPZ-mild}.

We define the set of basis elements $\fT := \CU \cup \CV$. To each element in $\fT$ we assign a degree by $\deg \Xi = -\frac{3}{2} - \kappa$ for a fixed constant $\kappa \in (0, \frac{1}{10})$, and then recursively by
\begin{equ}
\deg (\tau_1 \tau_2) = \deg \tau_1 + \deg \tau_2, \qquad \deg \CI(\tau) = \deg \tau + 2, \qquad \deg \partial \bar \tau = \deg \bar\tau - 1,
\end{equ}
where we consider only those elements which appear in the construction \eqref{eq:rule1}/\eqref{eq:rule2}. Recall that the degrees of the elements of $\CW_\poly$ have been defined above. We will use the set $\langle \fT \rangle$ containing all finite linear combinations of the element of $\fT$. It will be convenient to denote $\CI'(\tau) = \CI(\d \tau)$. Then we view $\tau \mapsto \CI(\tau)$, $\tau \mapsto \CI'(\tau)$ and $\bar \tau \mapsto \d \bar \tau$ as linear maps on respective subspaces of $\langle \fT \rangle$.

In order to solve equation \eqref{eq:openKPZ-new} on a suitable space, it is enough to use only a finite number of elements of $\fT$. More precisely, we set 
\begin{equ}
\CW := \Bigl\{\tau \in \CV : \deg \tau \leq \frac{1}{2} + \kappa\Bigr\} \cup \Bigl\{\tau \in \CU : \deg \tau < \frac{3}{2} + 2\kappa\Bigr\},
\end{equ}
and we define the \emph{model space} $\CT$ as the set of all linear combinations of the element of $\CW$. Let $\CA$ be the set of all values $\deg \tau$ for $\tau \in \CW$. For $\alpha \in \CA$, we set the space $\CT_\alpha$ to contain all linear combinations of the element of degree $\alpha$. Respectively, for $m \in \R$ we set $\CT_{< m} := \bigoplus_{\alpha < m} \CT_\alpha$, where $\alpha$ takes values in $\CA$. Analogously we define the space $\CT_{\leq m}$. We denote by $\CQ_{< m}$ and $\CQ_{\leq m}$ the projections from $\langle \fT \rangle$ to $\CT_{< m}$ and $\CT_{\leq m}$ respectively. Furthermore,  every element $f \in \langle\fT\rangle$ can be uniquely written as $f = \sum_{\tau \in \fT} f_\tau \tau$ for a finite collections of non-vanishing values $f_\tau \in \R$, and we define 
\begin{equation*}
    |f|_\alpha := \sum_{\tau \in \fT : |\tau| = \alpha} |f_\tau|,
\end{equation*}
postulating $|f|_\alpha=0$ if the sum runs over the empty set.

It is convenient to use a graphical representation of the elements of $\CW$. For this, we denote the element $\Xi$ by a node ``\,\<X>\,''. Application of the map $\CI$ to a non-polynomial element $\tau$ is depicted by an edge ``\,\<edge>\,'' going down from the diagram representing $\tau$. For example, the element $\CI(\Xi)$ corresponds to the diagram ``\,\<1>\,''. Respectively, an edge ``\,\<edged>\,'' corresponds to the element $\CI'$, e.g., $\CI'(\Xi)$ is depicted by ``\,\<1d>\,''. The product of elements $\tau_1$ and $\tau_2$ is represented by the diagram, obtained from the two diagrams of these symbols by drawing them from the same root. For example, $\<2d>$ is the diagram for $\CI'(\Xi)^2$. We write the symbols for the monomials as before. We list in Table~\ref{tab:symbols} all the elements in $\CW$ and their degrees. 

\begin{table}[h]
\centering
\begingroup
\setlength{\tabcolsep}{10pt} 
\renewcommand{\arraystretch}{1.4}
	\subfloat{
    	\begin{tabular}{cc}
		\hline
		\textbf{Element} & \textbf{Degree} \\
		\hline
		$\Xi$ & $-\frac{3}{2}-\kappa$ \\		
		$\one$ & $0$ \\		
		$X_1$    & $1$ \\
		$\<1>$              & $\frac{1}{2}-\kappa$ \\
		$\<1d>$              & $-\frac{1}{2}-\kappa$ \\
		$\<1d1>$              & $\frac{3}{2}-\kappa$ \\
		$\<1d2d>$              & $-2 \kappa$ \\
	\end{tabular} \hspace{0.3cm}}
	\subfloat{\hspace{0.3cm}
	\begin{tabular}{cc}
		\hline
		\textbf{Element} & \textbf{Degree} \\
		\hline
		$\<2d>$              & $-1-2\kappa$ \\
		$\<2d1>$              & $1-2\kappa$ \\
		$\<2d1d>$              & $-2\kappa$ \\
		$\<2d2d>$              & $-\frac{1}{2}-3\kappa$ \\
		$\<2d2d1>$              & $\frac{3}{2}-3\kappa$ \\
		$\<tree1>$              & $-4\kappa$ \\
		$\<tree2>$             & $-4\kappa$ \\[-0.2cm]
	\end{tabular}}
\endgroup
	\caption{The elements in $\CW$ and their degrees. \label{tab:symbols}}
\end{table}

\subsubsection{A structure group}

Now, we will define a structure group $\CG$ of linear transformations on $\CT$. For this, we introduce the set $\CW_+$, containing $X_1$, the elements of $\CW$ of the form $\CI(\tau)$ (there are three of them), and the elements $\CI'(\Psi)$ and $\CI'(\CI'(\Psi^2) \Psi)$ where we write for brevity $\Psi = \CI'(\Xi)$. Then we define $\CT_+$ to be the free commutative algebra generated by the elements of $\CW_+$.

We define a linear map $\Delta : \CT \to \CT \otimes \CT_+$ by
\begin{subequations}\label{eqs:coproduct}
\begin{equation}
\Delta \Xi = \Xi \otimes \one, \qquad \Delta \one = \one \otimes \one, \qquad \Delta X_1 = X_1 \otimes \one + \one \otimes X_1,
\end{equation} 
and then, denoting by $I$ the identity operator on $\CT_+$, we recursively define
\begin{equs}
\Delta \tau_1 \tau_2 &= (\Delta \tau_1) (\Delta \tau_2),\label{eq:Delta2} \\
\Delta \CI(\tau) &= (\CI \otimes I) \Delta \tau + \one \otimes \CI(\tau), \qquad \text{for}~ \tau \notin \{\Psi, \CI'(\Psi^2) \Psi\}, \label{eq:Delta3} \\
\Delta \CI(\tau) &= (\CI \otimes I) \Delta \tau + \one \otimes \CI (\tau) + \one \otimes X_1 \CI' (\tau) +  X_1 \otimes \CI' (\tau), \label{eq:Delta4}\\
&\qquad  \text{for}~ \tau \in \{\Psi, \CI'(\Psi^2) \Psi\}, 
\end{equs}
\end{subequations}
for the respective elements $\tau_i, \tau \in \CW$. In Table~\ref{tab:Delta} we write the action of $\Delta$ on all elements of $\CW$.

\begin{table}[h]
\centering
\begingroup
\setlength{\tabcolsep}{10pt} 
\renewcommand{\arraystretch}{1.2}
	\subfloat{
    	\begin{tabular}{cc} 
		\hline
		\textbf{Element} & \textbf{Image} \\
		\hline
		$\Xi$ & $\Xi \otimes \one$ \\
		$\one$ & $\one \otimes \one$ \\
		$X_1$ & $X_1 \otimes \one + \one \otimes X_1$ \\
		$\<1>$ & $\<1> \otimes \one + \one \otimes \<1>$  \\
		$\<1d>$ & $\<1d> \otimes \one$  \\
		$\<1d1>$ & $\<1d1> \otimes \one + \one \otimes \<1d1> + \one \otimes \<1d1d>\, X_1 + X_1 \otimes \<1d1d>$  \\
		$\<1d2d>$ & $\<1d2d> \otimes \one$  \\
		$\<2d>$ & $\<2d> \otimes \one$  \\
		$\<2d1>$              & $\<2d1> \otimes \one + \one \otimes \<2d1>$ \\
		$\<2d1d>$              & $\<2d1d> \otimes \one$ \\
		$\<2d2d>$              & $\<2d2d> \otimes \one$ \\[0.1cm]
		$\<2d2d1>$              & $\<2d2d1> \otimes \one + \one \otimes \<2d2d1> + \one \otimes \<2d2d1d> X_1 + X_1 \otimes \<2d2d1d>$ \\[0.2cm]
		$\<tree1>$              & $\<tree1> \otimes \one$ \\[0.3cm]
		$\<tree2>$             & $\<tree2> \otimes \one$ \\
	\end{tabular}}
\endgroup
	\caption{The action of the operator $\Delta$ \label{tab:Delta} on the elements of $\CW$.}
\end{table}

Having the operator $\Delta$ at hand, for any linear functional $f : \CT_+ \to \R$ we define the map $\Gamma_{\!f} : \CT \to \CT$ as 
\begin{equation}\label{eq:Gamma-general}
\Gamma_{\!f} \tau := (I \otimes f) \Delta \tau.
\end{equation}
Then the structure group $\CG$ is defined as $\CG := \{\Gamma_{\!f} : f \in \CG_+\}$, where $\CG_+$ contains all multiplicative linear functionals $f : \CT_+ \to \R$, i.e., satisfying $f(\tau \bar \tau) = f(\tau) f(\bar \tau)$ for $\tau, \bar \tau \in \CT_+$. Note that such functions necessarily satisfy $f(\one) = 1$. For example, if we denote the values of this function 
\begin{equs}
f(X_1) &= a, \qquad f(\<1>) = b, \qquad f(\<2d1>) = c, \qquad f(\<1d1>) = d, \\
 &\quad f(\<1d1d>) = g, \qquad f(\<2d2d1>) = h, \qquad f(\<2d2d1d>) = w,
\end{equs}
then the action of the map $\Gamma_{\!f}$ on the elements of $\CW$ is provided in Table~\ref{tab:linear_transformations}.

\begin{table}[h]
\centering
\begingroup
\setlength{\tabcolsep}{10pt} 
\renewcommand{\arraystretch}{1.2}
	\subfloat{
    	\begin{tabular}{cc} 
		\hline
		\textbf{Element} & \textbf{Image} \\
		\hline
		$\Xi$ & $\Xi$ \\
		$\one$ & $1$ \\
		$X_1$ & $X_1 + a \one$ \\
		$\<1>$ & $\<1> + b \one$  \\
		$\<1d>$ & $\<1d>$  \\
		$\<1d1>$ & $\<1d1> + (d + a g) \one + g X_1$ \\
		$\<1d2d>$ & $\<1d2d>$ \\[0.23cm]
	\end{tabular} \hspace{0.3cm}}
	\subfloat{\hspace{0.5cm}
	    	\begin{tabular}{cc} 
		\hline
		\textbf{Element} & \textbf{Image} \\
		\hline
		$\<2d>$ & $\<2d>$  \\
		$\<2d1>$              & $\<2d1> + c\one$ \\
		$\<2d1d>$              & $\<2d1d>$ \\
		$\<2d2d>$              & $\<2d2d>$ \\
		$\<2d2d1>$              & $\<2d2d1> + (h + a w) \one + w X_1$ \\
		$\<tree1>$              & $\<tree1>$ \\
		$\<tree2>$             & $\<tree2>$ \\
	\end{tabular}}
\endgroup
	\caption{The action of $\Gamma_{\!f}$ on the elements in $\CW$. \label{tab:linear_transformations}}
\end{table}

One can readily check that the structure group $\CG$ consists of all linear maps $\Gamma : \CT \to \CT$ such that 
\begin{itemize}\setlength\itemsep{0em}
\item $\Gamma \Xi = \Xi$, $\Gamma \one = \one$ and $\Gamma X_1 = X_1 - x \one$ for some $x \in \R$.
\item For every $\tau \in \CT_\alpha$ one has $\Gamma \tau - \tau \in \CT_{< \alpha}$.
\item For every $\tau_1, \tau_2 \in \CW$, such that $\tau_1 \tau_2 \in \CW$ one has $\Gamma (\tau_1 \tau_2) = (\Gamma \tau_1) (\Gamma \tau_2)$.
\item For every $\tau \in \CW$ such that $\CI(\tau) \in \CW$, one has $\Gamma \CI(\tau) - \CI (\Gamma \tau) \in \CT_\poly$; the same identity holds for the map $\CI'$.
\end{itemize}

 Thus, we have constructed a regularity structure $\ST = (\CA, \CT, \CG)$. The fixed point equation on the regularity structure $\ST$ is formulated in \cite[Sec.~6.3]{MateBoundary}. In order to use the solution map in the following section, we would like to state the fixed point equation here. 

\subsection{The Neumann heat kernel}
\label{sec:Neumann}

Equation \eqref{eq:openKPZ-new} is more convenient to work with, because we have the explicit formula for the Neumann heat kernel $\widetilde{P}$ in the mild equation \eqref{eq:openKPZ-mild}. In particular, we can use \cite[Ex.~4.15]{MateBoundary}
to write the kernel in the form $\widetilde{P} = K + Z$, where the singular, compactly supported part $K : \R^2 \times \R^2 \setminus \{(z, z) \in \R^2\} \to \R$ has the properties from \cite[Def.~2.11]{MateBoundary} with the singularity at the diagonal of order $\beta = 2$. In particular, it can be decomposed into a sum of smooth localized functions $K = \sum_{n \geq 0} K_n$, such that $K_n$ is supported in a $2^{-n}$-neighborhood of the diagonal of $\R^2 \times \R^2$ and $\| D_1^k D_2^\ell K_n\|_{L^\infty} \lesssim 2^{(|k+\ell|_\s + 1)n}$ for any multiindices $k$ and $\ell$. This is a standard decomposition of the heat kernel, which is used to prove the Schauder estimates, and it was used in \cite{Regularity} to prove estimates in the framework of regularity structures. 

Due to the Neumann boundary conditions, the function $Z$ is more complicated than in the case of the periodic boundary conditions \cite{Regularity}, where it was smooth and compactly supported. In this case it has singularities on the boundaries of the space-time domain. More precisely, the initial condition is given on the set $P_0 = \{(0, x) : x \in \R\}$ and the Neumann boundary conditions are given on $P_1 = \R \times \{0, 1\}$. Then the function $Z : (\R^2 \setminus P)^2 \to \R$ is undefined on the boundaries $P = P_0 \cup P_1$ and has the properties from \cite[Def.~4.14]{MateBoundary} with the singularity at the boundary of order $\beta = 2$. In particular, one can write $Z = \sum_{n \geq 0} Z_n$, where each function $Z_n$ is smooth, satisfies $\| D_1^k D_2^\ell Z_n\|_{L^\infty} \lesssim 2^{(|k+\ell|_\s + 1)n}$ and is supported on 
\begin{equ}
\bigl\{(z_1, z_2) = ((t_1,x_1), (t_2,x_2)) : |z_1|_{P_1} + |z_2|_{P_1} + |t_1 - t_2|^{1/2} \lesssim 2^{-n}\bigr\},
\end{equ}
where $|(t,x)|_{P_1} := |x| \wedge |x-1| \wedge 1$ is the distance to the boundary $P_1$.

\subsection{Models}
\label{sec:model}

The notion of model for the regularity structure is exactly the same as in \cite{Regularity} and is independent of the boundary conditions on the domain on which the SPDE is being solved. More precisely, a model $\CM = (\Pi, \Gamma)$ consists of the following elements: 
\begin{itemize}
\item A map $\Gamma : \R^2 \times \R^2 \to \CG$ such that $\Gamma_{z_1 z_2} \circ \Gamma_{z_2 z_3} = \Gamma_{z_1 z_3}$ for all points $z_i \in \R^2$.
\item A collection of continuous linear maps $\Pi_z : \CT \to \CS'(\R^2)$ such that $\Pi_{z_1} \circ \Gamma_{z_1 z_2} = \Pi_{z_2}$ for all $z_i \in \R^2$.
\end{itemize}
Furthermore, these maps are required so satisfy the following bounds: for every compact set $\fK \subset \R^2$ one has 
\begin{equ}[eq:model-bounds]
|(\Pi_z \tau) (\phi_z^\lambda)| \lesssim \lambda^{\deg \tau}, \qquad \| \Gamma_{z \bar z} \tau\|_{m} \lesssim \| z - \bar z\|_{\s}^{\deg \tau - m}
\end{equ}
uniformly in $z, \bar z \in \fK$, $\lambda \in (0,1]$, $\phi \in \CB$, $\tau \in \CW$, and $m < \deg \tau$. The proportionality constant in this bound can depend on $\fK$, but is independent of the other quantities. We always consider models compatible with polynomials in the sense $(\Pi_{(t,x)} \one)(s, y) = 1$ and $(\Pi_{(t,x)} X_1)(s, y) = y - x$.

We denote by $\| \Pi \|_{\fK}$ and $\| \Gamma\|_{\fK}$ the smallest constants such that the bounds \eqref{eq:model-bounds} hold. We will often work on the set $\fK = [-2, T] \times [0, 1]$ for $T > 0$, in which case we will simply write $\| \Pi \|_{T}$ and $\| \Gamma\|_{T}$. We will also use $\| \CM \|_T = \| \Pi \|_{T} + \| \Gamma\|_{T}$.

\subsubsection{A canonical lift of the noise}
\label{sec:lift}

Since the definition of the model is independent of the boundary conditions, this allows us to use the admissible model $\CM_{\KPZ} = (\Pi, \Gamma)$ on $\ST$ constructed for the periodic KPZ equation in \cite{FrizHairer}, using the singular part $K$ of the heat kernel (see Section~\ref{sec:Neumann}). The construction was done via smooth approximations. Namely, the authors considered a smooth mollification $\xi_\eps$ of the driving noise, such that $\xi_\eps \to \xi$ in $\CC^\alpha(\R^2)$ for any $\alpha < -\frac{3}{2}$. The smooth noise $\xi_\eps$ was lifted to a model $(\Pi^\eps, \Gamma^\eps)$, such that after a suitable renormalization the model was proved to have a limit $(\Pi, \Gamma)$ as $\eps \to 0$ with respect to the metric on the space of models. Since only the singular part $K$ of the heat kernel was used in this definition, and $K$ has the same properties for the Neumann and periodic heat kernels, the model $(\Pi, \Gamma)$ is indeed defined in the same way and is independent of the boundary conditions. Since we are not going to use the smooth approximations, we prefer not to go into more details and refer to \cite{FrizHairer}.

\subsection{Modeled distributions}

Using a model $(\Pi, \Gamma)$, the set of singular modeled distributions $\SD^{\gamma, w}_P$ is introduced in \cite[Def.~3.2]{MateBoundary}, for $\gamma > 0$ measuring the regularity of modeled distributions, and $w = (\eta, \sigma, \mu) \in \R^3$ measuring singularities on the boundaries $P$ (the latter is defined in Section~\ref{sec:Neumann}). More precisely, the values $\eta$, $\sigma$ and $\mu$ describe the orders of singularities at $P_0$, $P_1$ and $P_0 \cap P_1$ respectively. 

Before defining the space, we need to introduce several quantities. As before we use the distance to the boundary $|(t,x)|_{P_1} := |x| \wedge |x-1| \wedge 1$ is the distance to the boundary $P_1$. We also define the distance $|(t,x)|_{P_0} := |t|^{1/2} \wedge 1$ to the boundary $P_0$, and we set 
\begin{equ}
|z_1, z_2|_{P_i} := |z_1|_{P_i} \wedge |z_2|_{P_i}.
\end{equ}
For any compact set $\fK \subset \R^2$ we define 
\begin{equ}
\fK_P := \bigl\{ (z_1, z_2) \in (\fK \setminus P)^2 : z_1 \neq z_2,\; \|z_1 - z_2\|_\s \leq |z_1, z_2|_{P_0} \wedge |z_1, z_2|_{P_1} \bigr\}.
\end{equ}
The space $\SD^{\gamma, w}_P$ contains those $f : \R^2 \setminus P \to \CT_{< \gamma}$ such that for every compact set $\fK \subset \R^2$ the following norm is finite 
\begin{equs}[eq:norm-definition]
&\$ f \$_{\gamma, w; \fK} := \sup_{z \in \fK : 0 < |z|_{P_0} \leq |z|_{P_1}} \sup_{m < \gamma} \frac{\| f(z) \|_{m}}{|z|^{\mu - m}_{P_1} (|z|_{P_0} / |z|_{P_1})^{(\eta - m) \wedge 0}} \\
&\qquad + \sup_{z \in \fK : 0 < |z|_{P_1} \leq |z|_{P_0}} \sup_{m < \gamma} \frac{\| f(z) \|_{m}}{|z|^{\mu - m}_{P_0} (|z|_{P_1} / |z|_{P_0})^{(\sigma - m) \wedge 0}} \\
&+ \sup_{(z_1, z_2) \in \fK_P} \sup_{m < \gamma} \frac{\| f(z_1) - \Gamma_{z_1, z_2} f(z_2) \|_{m}}{\| z_1 - z_2\|_\s^{\gamma - m} |z_1, z_2|^{\eta - \gamma}_{P_0} |z_1, z_2|^{\sigma - \gamma}_{P_1} (|z_1, z_2|_{P_0} \vee |z_1, z_2|_{P_1})^{\mu - \eta - \sigma + \gamma}}.
\end{equs}
Properties of the functions from $\SD_P^{\gamma, w}$ can be found in Sections~3 and 4 in \cite{MateBoundary}. The space $\SD^{\gamma, w}_P$ depends on the map $\Gamma$ of the underlying model, and we will write $\SD^{\gamma, w}_P(\Gamma)$ when we want to show this dependence. We denote by $\| f \|_{\gamma, w; \fK}$ the sum of the first two terms in \eqref{eq:norm-definition}; we note that this quantity is independent of the underlying model. Given then another model $(\bar \Pi, \bar \Gamma)$ and a modeled distribution $\bar f \in \SD^{\gamma, w}_P(\bar \Gamma)$, we define the distance 
\begin{equs}
&\$ f; \bar f \$_{\gamma, w; \fK} := \| f - \bar f \|_{\gamma, w; \fK} \\
&\quad + \sup_{(z_1, z_2) \in \fK_P} \sup_{m < \gamma} \frac{\| f(z_1) - \bar f(z_1) - \Gamma_{z_1, z_2} f(z_2) + \bar \Gamma_{z_1, z_2} \bar f(z_2) \|_{m}}{\| z_1 - z_2\|_\s^{\gamma - m} |z_1, z_2|^{\eta - \gamma}_{P_0} |z_1, z_2|^{\sigma - \gamma}_{P_1} (|z_1, z_2|_{P_0} \vee |z_1, z_2|_{P_1})^{\mu - \eta - \sigma + \gamma}}.
\end{equs}
When we work on the set $\fK = [-2, T] \times [0, 1]$, we will simply write $\$ f \$_{\gamma, w; T}$ and $\$ f; \bar f \$_{\gamma, w; T}$.

The reconstruction map $\CR$, associated to a model $(\Pi, \Gamma)$, maps modeled distributions to actual functions or distributions on $\R^2$. We refer to \cite{Regularity, MateBoundary} for the definition of $\CR$, the proof of its existence and its action on the just introduced singular modeled distributions. 

\subsection{A fixed point problem}
\label{sec:fixed-point}

Using the definitions from the previous sections, we can now solve equation \eqref{eq:openKPZ-new} as a fixed point problem on a suitable space of singular modeled distributions. For this we need to make several definitions. Throughout this section we are going to work with the model $\CM_{\KPZ} = (\Pi, \Gamma)$ introduced in Section~\ref{sec:lift}.

The fixed point problem can be stated for the remainder of a one step iteration of \eqref{eq:openKPZ-new}. Namely, we define the processes 
\begin{equ}[eq:psi]
\psi(t,x) := \int_0^1 \int_{0}^t \widetilde{P}_{t-s}(x,y) \xi(d s, d y)
\end{equ}
and $w(t,x) := \tilde{h}(t, x) - \psi(t,x)$. Then from \eqref{eq:openKPZ-new} we conclude that $w$ solves equation (for smooth $\xi$ these computations are correct, while for a space-time white noise $\xi$ they are formal)
\begin{equ}[eq:openKPZ-DaPrato]
\partial_t w = \frac{1}{2} \partial^2_x w + \frac{1}{2} \bigl( (\partial_x w)^2 + 2 \partial_x w \partial_x \psi + (\partial_x \psi)^2\bigr) + a_1 \bigl(\partial_x w + \partial_x \psi\bigr) + a_2,
\end{equ}
with the initial condition $w(0,\cdot) = \tilde{h}_0(\cdot)$.

Let $\kappa > 0$ be as in the definition of the degree of the symbol $\Xi$ in Section~\ref{sec:RS}. Then we set $\gamma_0 = \kappa$. Using the decomposition of the Neumann heat kernel $\widetilde{P} = K + Z$ (see Section~\ref{sec:Neumann}), we can introduce the integration map 
\begin{equ}
\CP := \CK_{\gamma_0} + Z_{\gamma_0} \CR,
\end{equ}
where $\CR$ is the reconstruction map associated to the model, and $\CK_{\gamma_0}$ and $Z_{\gamma_0}$ are defined in Eq.~2.8 and Remark~4.17 in \cite{MateBoundary} respectively. Let us define 
\begin{equ}
\Psi := \CP (\bR^D_+ \Xi),
\end{equ}
with the projection $\bR^D_+ (t,x) = \one$ if $t > 0$, $x \in D := [0, 1]$, and $\bR^D_+ (t,x) = 0$ otherwise. From the properties of the involved operators one has $\CR \Psi = \psi$, where the latter is defined in \eqref{eq:psi}. Then the fixed point problem corresponding to equation \eqref{eq:openKPZ-DaPrato} is 
\begin{equ}[eq:fixed-pt]
W = \frac{1}{2}\CP \bR^D_+ \bigl( (\d W)^{2} + 2 (\d W) (\d \Psi) + (\d \Psi)^{2} \bigr) + \CP \bR^D_+ \bigl( a_1 \d W + a_1 \d \Psi + a_2 \bigr) + \widetilde{P} \widetilde{h}_0,
\end{equ}
where $\d$ is the abstract derivative on the regularity structure (see Section~\ref{sec:RS}), $\widetilde{P} \widetilde{h}_0$ is the polynomial lift of the harmonic extension, and the functions $a_1$ and $a_2$, defined in \eqref{eq:a-functions}, are naturally lifted to polynomials as 
\begin{equ}
a_1(X_1) := 2 \bigl(- 2 (v + a) X_1 + u + a\bigr) \one, \quad a_2(X_1) := \frac{1}{2} \bigl(\bigl(-2 (v + a) X_1 + u + a\bigr)^2  -v - a\bigr)\one.
\end{equ}

In order to solve equation \eqref{eq:fixed-pt}, we need to specify the respective spaces. The term $(\d \Psi)^{2}$ has the lowest degree on the right-hand side of \eqref{eq:fixed-pt}, $\deg (\d \Psi)^{2} = -1 - 2\kappa$. Then $W$ takes values in the sector $V := \CI(\CT_{\geq -1 - 2\kappa}) + \CT_{\poly}$ with the smallest degree $\alpha = 0$. The terms on the right-hand side of \eqref{eq:fixed-pt} take values in the sectors 
\begin{equs}[eq:RHS]
\frac{1}{2}(\d W)^{2} &\in V_0 := (\d V)^{\star 2}, \\
\frac{1}{2}(\d W) (\d \Psi) &\in V_1 := (\d V) \star \CT_{\geq -1/2 - \kappa}, \\
\frac{1}{2}(\d \Psi)^{2} &\in V_2 := \CT_{\geq -1 - 2\kappa}, \\
 a_1 \d W &\in V_3 := \d V, \\
  a_1 \d \Psi &\in V_4 := \CT_{\geq -1/2 - \kappa}, \\
   a_2 &\in V_5 := \CT_{\poly}.
\end{equs}
We denote by $\alpha_i$ the smallest degree of the sector $V_i$, so that $\alpha_0 = -4\kappa$, $\alpha_1 = -\frac{1}{2} - 3\kappa$, $\alpha_2 = -1 - 2\kappa$, $\alpha_3 = - 2\kappa$, $\alpha_4 = -\frac{1}{2} - \kappa$ and $\alpha_5 = 0$. Now, we need to specify particular spaces of modeled distributions, in which these terms live. Computations of the regularities and orders of singularities at the boundary follow from the properties of singular modeled distributions provided in \cite{MateBoundary}. We will be looking for a solution $W \in \SD_P^{\gamma, w}$ with 
\begin{equ}[eq:gamma]
\gamma = \frac{3}{2} + \kappa, \qquad \eta = \kappa, \qquad \sigma = \frac{1}{2} + 2 \kappa,  \qquad w = (\eta, \sigma, 0),
\end{equ}
such that $\d W \in \SD_P^{\gamma - 1, (\eta-1, \sigma-1, \kappa - 1)}$. We note that $\d W$ takes values in a sector with the minimal degree $\alpha_3 = -2 \kappa$. Then the $i$-th term in \eqref{eq:RHS} takes values in the space $\SD_P^{\gamma_i, w_i} (V_i)$ with $\gamma_i = \kappa$ and $w_i = (\eta_i, \sigma_i, \mu_i)$, where  
\begin{equs}
\eta_0 &= 2 (\eta - 1) = 2 \kappa - 2, \qquad &\sigma_0 &= 2 (\sigma - 1) = 4 \kappa - 1, \qquad &\mu_0 &= 2 \kappa - 2, \\
\eta_1 &= \eta - 1 + \alpha_4 = -\frac{3}{2}, \qquad &\sigma_1 &= \sigma - 1 + \alpha_4 = \kappa - 1, \qquad &\mu_1 &= -\frac{3}{2}, \\
\eta_2 & = -1 - 2\kappa, \qquad &\sigma_2 & = -1 - 2\kappa, \qquad &\mu_2 & = -1 - 2\kappa, \\
\eta_3 &= \eta - 1 = \kappa - 1, \qquad &\sigma_3 &= \sigma - 1 = -\frac{1}{2} + 2 \kappa, \qquad &\mu_3 &= \kappa - 1, \\
\eta_4 & = -\frac{1}{2} - \kappa, \qquad &\sigma_4 & = -\frac{1}{2} - \kappa, \qquad &\mu_4 & = -\frac{1}{2} - \kappa, \\
\eta_5 &= 0, \qquad &\sigma_5 &= 0, \qquad &\mu_5 & = 0.
\end{equs}

The value of $\sigma_2$ is smaller than $1$ and does not satisfy the assumptions of \cite[Lem.~4.12]{MateBoundary}. In particular, the abstract integration operator $\CP$ can not be applied to $(\d \Psi)^{2}$. In order to resolve this problem, the singularities at the boundary should be renormalised, what was done in \cite[Sec.~6.3]{MateBoundary}. Then Theorem~5.6 in \cite{MateBoundary} applies, which implies that equation \eqref{eq:fixed-pt} has a unique local solution $W \in \SD_P^{\gamma, w}$. However, the reconstructed solution $\tilde{h} (t,x) = \CR (W + \Psi)(t,x)$ solves equation \eqref{eq:openKPZ-new} with the Neumann boundary conditions 
\begin{equ}
\partial_x \tilde{h}(t,x) \Big|_{x = 0}= - a, \qquad \partial_x \tilde{h}(t,x) \Big|_{x = 1}= a,
\end{equ}
with the constant $a$ defined in \eqref{eq:a}. Performing the transformation \eqref{eq:from-h-to-tilde-h}, we conclude that $h$ solves the open KPZ equation \eqref{eq:openKPZ}. As follows from the argument provided at the end of Section~6.3 in \cite{MateBoundary}, this solution coincides with the Hopf-Cole solution defined in \cite{CorwinShen}.

\section{The strong Feller property of the open KPZ equation}
\label{sec:Feller-regularity}

To get the strong Feller property for the semigroup generated by the open KPZ equation, we will use the general result provided in \cite[Thm.~4.8]{HairerMattingly}. This result relies on several technical assumption, and we prefer not to duplicate them here but rather refer to \cite[Assums.~6-11]{HairerMattingly}. To check these assumptions, we will use the terminology of the theory of regularity structures, which can be found in \cite{Regularity}.

We denote by $\MM$ the space of all admissible models on $\ST$, i.e., those models which act naturally on monomials $X^\ell$ and on the abstract integration map $\CI$. We consider $\fR$ to be a subgroup of the renormalization group. We have a continuous ``canonical lift map'' $\LL : \CC^\infty(\R^2) \to \MM$ and a finite dimensional Lie group $\fR$, with a continuous action $M$ of $\fR$ onto $\MM$. As we stated in the beginning of Section~\ref{sec:model}, the canonical lift of the noise to a model does not depend on the boundary conditions and coincides with the canonical lift for the KPZ equation. The Lie group $\fR$ is again the same as for the KPZ equation \cite[Sec.~15.5]{FrizHairer}. More precisely, the group $\fR$ is $4$-dimensional, whose action $M_g$ (with $g \in \fR$) on the set of trees $\fT$ defined in Section~\ref{sec:RS} can be written in the form $M_g = \exp(- \sum_{i = 0}^3 C_i L_i)$ for fixed constants $C_0, \ldots, C_3$ and for the generators $L_i$ determined by the following contraction rules:
\begin{equ}
L_0\, \<1d2d> = \one, \qquad L_1\, \<2d> = \one, \qquad L_2\, \<tree2> = \one, \qquad L_3\, \<tree1> = \one,
\end{equ}
where we use the diagrams from Table~\ref{tab:symbols}. These definitions should be understood as follows: if $\tau$ is any diagram then $L_i \tau$ is the sum of all diagrams obtained from $\tau$ by performing the contraction rule for $L_i$. The image of $M_g$, applied to the elements of Table~\ref{tab:symbols}, is provided in Table~\ref{tab:renormalization}. The action $M_g$ can be extended to the space of admissible models $\CM$ as in \cite[Sec.~8]{Regularity}. This shows that the first part of Assumption~6 in \cite{HairerMattingly} holds.

\begin{table}[h]
\centering
\begingroup
\setlength{\tabcolsep}{10pt} 
\renewcommand{\arraystretch}{1.4}
	\subfloat{
    	\begin{tabular}{cc}
		\hline
		\textbf{Element $\tau$} & \textbf{Image $M_g \tau$} \\
		\hline
		$\Xi$ & $\Xi$ \\		
		$\one$ & $\one$ \\		
		$X_1$    & $X_1$ \\
		$\<1>$              & $\<1>$ \\
		$\<1d>$              & $\<1d>$ \\
		$\<1d1>$              & $\<1d1>$ \\
		$\<1d2d>$              & $\<1d2d> - C_0 \one$ \\
	\end{tabular} \hspace{0.3cm}}
	\subfloat{\hspace{0.3cm}
	\begin{tabular}{cc}
		\hline
		\textbf{Element $\tau$} & \textbf{Image $M_g \tau$} \\
		\hline
		$\<2d>$              & $\<2d> - C_1 \one$ \\
		$\<2d1>$              & $\<2d1>$ \\
		$\<2d1d>$              & $\<2d1d>$ \\
		$\<2d2d>$              & $\<2d2d> - 2 C_0\, \<1d>$ \\
		$\<2d2d1>$              & $\<2d2d1> - 2 C_0\, \<1d1>$ \\
		$\<tree1>$              & $\<tree1> - 2 C_0\, \<1d2d> - C_0 \<2d1d> - C_3 \one$ \\
		$\<tree2>$             & $\<tree2> - C_2 \one$ \\[-0.15cm]
	\end{tabular}}
\endgroup
	\caption{The image of $M_g$ applied to the diagrams listed in Table~\ref{tab:symbols}. Some trees (like the second tree in the second column) do not get renormalized because of our assumption $\CI'(\one) = \CI(\one) = 0$. \label{tab:renormalization}}
\end{table}

Our next goal is to check the assumption on non-linearities in SPDEs. For this we write equation \eqref{eq:fixed-pt} as 
\begin{equ}[eq:fixed-pt-new]
W = \sum_{i = 0}^5 \CP \bR^D_+ F_i(W, \Psi) + \widetilde{P} \widetilde{h}_0.
\end{equ}
As we describe in Section~\ref{sec:fixed-point}, each function $F_i : \CQ_{< \gamma}\CU \times \langle \Psi \rangle \to \CQ_{< \bar \gamma_i} V_i$ is strongly locally Lipschitz continuous from $\SD_P^{\gamma, w} \times \langle \Psi \rangle$ to $\SD_P^{\gamma_i, w_i}$ over the underlying admissible model. Fixing the constant $\zeta = 1 - 2 \kappa$, we have furthermore the inclusions 
\begin{equ}
\CT_\poly \subset \CU \subset \CT_\poly \oplus \CT_{\geq \zeta}, \qquad \CT_\poly \subset V_i \subset \CT_\poly \oplus \CT_{\geq \alpha_i},
\end{equ}
where the constants $\alpha_i$ are as in Section~\ref{sec:fixed-point}. This shows that the second part of Assumption~6 in \cite{HairerMattingly} holds.

The next assumption we are going to check is about driving noises of SPDEs and their lifts to admissible models. We need this assumption to hold only for the Gaussian noise which drives the open KPZ equation \eqref{eq:openKPZ}.

We have the following  from the analogous result for the periodic KPZ equation \cite{FrizHairer}. For every mollifier $\rho \in \CC^\infty_0(\R^2)$ there is a sequence $g_\eps \in \fR$, such that the sequence of random models $M_{g_\eps} \LL (\xi \star \rho^\eps)$ (we recall that $M$ is an action of the renormalisation group $\fR$ on models in $\CM$) converges in probability in $\CM$ to a limiting random model $\bxi = (\Pi, \Gamma)$. Moreover, for every $z = (t, x), \bar z = (\bar t, \bar x) \in [0, 1] \times \R$ such that $\bar t \leq t$, the random variables $(\Pi_z \tau)(\phi_z)$ and $\Gamma_{z \bar z}$ are $\CF_t$-measurable, for every $\phi \in \CC_0^\infty(\R^2)$ supported on $\R_- \times \R$. Measurability of the model follows from its explicit construction in \cite[Sec.~15.4]{FrizHairer}. This is Assumption~7 in \cite{HairerMattingly}.

Now, we are going to check Assumption~8 in \cite{HairerMattingly}. For $g \in \fR$ and a smooth function $\xi \in \CC^\infty(\R^2)$, let us consider the model of the form $\bxi_g = M_g \LL(\xi)$, where we recall that $\LL$ is a lift of $\xi$ to a model and $M_g$ is its renormalisation. Let $W$ be the solution to \eqref{eq:fixed-pt-new} (which is just another way to write \eqref{eq:fixed-pt}) and let us define its reconstruction $w = \CR_g W$. We need to derive a PDE solved by $w$. More precisely, we are going to show that $w$ solves
\begin{equ}[eq:renormalized-equation]
\partial_t w = \frac{1}{2} \partial^2_x w + \frac{1}{2} \bigl( (\partial_x w)^2 + 2 \partial_x w \partial_x \psi + (\partial_x \psi)^2 - c_g^{(1)} \partial_x w - c_g^{(2)} \partial_x \psi - c_g^{(3)}\bigr) + a_1 \bigl(\partial_x w + \partial_x \psi\bigr) + a_2,
\end{equ}
for some constants $c_g^{(i)}$, $i = 1, 2, 3$, depending on $g$, and where the functions $\psi$ and $a_i$ are the same as in \eqref{eq:openKPZ-DaPrato}.

It is more convenient to write equation \eqref{eq:fixed-pt} as 
\begin{equ}[eq:fixed-point-proof]
W = \frac{1}{2} \CI \bigl( (\d W)^{2} + 2 (\d W) (\d \Psi) + (\d \Psi)^{2} \bigr) + \CI \bigl( a_1 \d W + a_1 \d \Psi \bigr) + \texttt{Rem}(W),
\end{equ}
where the remainder $\texttt{Rem}(W)$ is an abstract polynomial. Performing Picard iterations in this equation, we can see that the solution $W$ can be written in the form 
\begin{equ}[eq:W-expansion]
W = w\, \one\,+ \tilde w\, X_1\, + \frac{1}{2}\, \<2d1>\, + \frac{1}{4}\, \<2d2d1>\,+ \Bigl(a_1(0) + \frac{\tilde w}{2}\Bigr)\, \<1d1>\,,
\end{equ}
for some real-valued functions $w$ and $\tilde w$. We note that the solution theory of the equation, provided in Section~\ref{sec:fixed-point}, suggests that $W \in \SD_P^{\gamma, w}$ where $\gamma$ is slightly larger than $\frac{3}{2}$. Hence, all the elements in the expansion \eqref{eq:W-expansion} with degrees greater than $\deg \<1d1>$ can be omitted. Applying the abstract derivative, we get 
\begin{equ}
\d W = \tilde w\, \one\, + \frac{1}{2}\, \<2d1d>\, + \frac{1}{4}\, \<2d2d1d>\,+ \Bigl(a_1(0) + \frac{\tilde w}{2}\Bigr)\, \<1d1d>\,,
\end{equ}
which is a modeled distribution of regularity $\gamma-1$ (which is slightly greater than $\frac{1}{2}$). The diagrams $\<2d2d1d>$ and $\<1d1d>$ are not included into the regularity structure $\ST$ (see Table~\ref{tab:symbols}), because they are not required to make sense of the non-linearity in equation \eqref{eq:fixed-point-proof}. These diagrams are however elements of the set of trees $\fT$ defined in Section~\ref{sec:RS}, and the action $M_g$ can be applied to them. 
Then the action $M_g$ applied to $\d W$ yields (see Table~\ref{tab:renormalization})
\begin{equ}[eq:dW-remormalized]
M_g \d W = \d W - \frac{1}{2} C_0\, \<1d1d>
\end{equ}
and 
\begin{equ}[eq:dW-remormalized-2]
\CQ_{\leq 0} (M_g \d W)^2 =\CQ_{\leq 0} (\d W)^2.
\end{equ}
Furthermore, the functions inside the integrals on the right-hand side of \eqref{eq:fixed-point-proof} can be written up to order $0$ as
\begin{equs}
\CQ_{\leq 0} &\Bigl( (\d W)^{2} + 2 (\d W) (\d \Psi) + (\d \Psi)^{2} \Bigr) = \CQ_{\leq 0} \Bigl(\tilde w\, \one\, + \frac{1}{2}\, \<2d1d>\, + \frac{1}{4}\, \<2d2d1d>\,+ \Bigl(a_1(0) + \frac{\tilde w}{2}\Bigr)\, \<1d1d>\, \Bigr)^2 \\
&\qquad + 2 \CQ_{\leq 0} \Bigl(\tilde w\, \one\, + \frac{1}{2}\, \<2d1d>\, + \frac{1}{4}\, \<2d2d1d>\,+ \Bigl(a_1(0) + \frac{\tilde w}{2}\Bigr)\, \<1d1d>\, \Bigr)\,\<1d> + \<2d> \\
& = \tilde w^2 \, \one + \frac{1}{4}\, \<tree2>\, + \tilde w\, \<2d1d> + 2 \tilde w\, \<1d>\, + \<2d2d>\, + \frac{1}{2}\, \<tree1>\,+ (2 a_1(0) + \tilde w)\, \<1d2d> + \<2d>
\end{equs}
and 
\begin{equs}
\CQ_{\leq 0} \Bigl( a_1 \d W + a_1 \d \Psi \Bigr) = a_1 \Bigl(\tilde w\, \one\, + \frac{1}{2}\, \<2d1d>\, + \frac{1}{4}\, \<2d2d1d>\,+ \,\<1d>\, \Bigr).
\end{equs}
Applying the map $M_g$ to these functions (see Table~\ref{tab:renormalization}) and using \eqref{eq:dW-remormalized}-\eqref{eq:dW-remormalized-2}, we get
\begin{equs}
M_g &\CQ_{\leq 0} \Bigl( (\d W)^{2} + 2 (\d W) (\d \Psi) + (\d \Psi)^{2} \Bigr) = \CQ_{\leq 0} \Bigl( (\d W)^{2} + 2 (\d W) (\d \Psi) + (\d \Psi)^{2} \Bigr) \\
&\qquad - \Bigl(\frac{1}{4} C_2 \one + 2 C_0 \<1d> + C_0\, \<1d2d> + \frac{1}{2} C_0 \<2d1d> + \frac{1}{2} C_3 \one + (2 a_1(0) + \tilde w) C_0 \one + C_1 \one \Bigr) \\
&= \CQ_{\leq 0} \Bigl( (M_g \d W)^{2} + 2 (M_g \d W) (\d \Psi) + (\d \Psi)^{2} \Bigr) \\
&\qquad - C_0 \CQ_{\leq 0} (M_g \d W) - 2 C_0 \d \Psi - \Bigl(\frac{1}{4} C_2 + \frac{1}{2} C_3 + 2 a_1(0) C_0 + C_1 \Bigr) \one
\end{equs}
and 
\begin{equs}
M_g \CQ_{\leq 0} \Bigl( a_1 \d W + a_1 \d \Psi \Bigr) = \CQ_{\leq 0} \Bigl( a_1 M_g \d W + a_1 \d \Psi \Bigr)
\end{equs}
Applying the reconstruction operator $\CR_g$ to these functions and using \cite[Prop.~15.10]{FrizHairer}, we get 
\begin{equs}
&\CR_g M_g \CQ_{\leq 0} \Bigl( (\d W)^{2} + 2 (\d W) (\d \Psi) + (\d \Psi)^{2} \Bigr) \\
& = \Bigl( (\partial_x w)^2 + 2 \partial_x w \partial_x \psi + (\partial_x \psi)^2 \Bigr) - C_0 \partial_x w - 2 C_0 \partial_x \psi - \Bigl(\frac{1}{4} C_2 + \frac{1}{2} C_3 + 2 a_1(0) C_0 + C_1 \Bigr)
\end{equs}
and 
\begin{equs}
\CR_g M_g \CQ_{\leq 0} \Bigl( a_1 \d W + a_1 \d \Psi \Bigr) = \CQ_{\leq 0} \Bigl( a_1 \partial_x w + a_1 \partial_x \psi \Bigr),
\end{equs}
where $\partial_x w = \CR_g M_g \d W$. Hence, applying $\CR_g M_g$ to equation \eqref{eq:fixed-point-proof} we get exactly \eqref{eq:renormalized-equation} with the constants 
\begin{equ}
c_g^{(1)} = C_0, \qquad \qquad c_g^{(2)} = 2 C_0, \qquad\qquad c_g^{(3)} = \frac{1}{4} C_2 + \frac{1}{2} C_3 + 2 a_1(0) C_0 + C_1.
\end{equ}
Moreover, the function in the non-linearity in \eqref{eq:renormalized-equation} is locally Lipschitz continuous with respect to $\d w$ because it is a polynomial. This gives Assumption~8 in \cite{HairerMattingly}.

The map $F_i$ in \eqref{eq:fixed-pt-new} is twice differentiable as a map between the finite-dimensional vector spaces $\CQ_{< \gamma} \CU$ and $\CQ_{< \gamma_i} \CV$, because it is a polynomial. 
We can compute the first derivatives of these functions:
\begin{equs}
D F_0(W, \Psi) = \d W, \qquad D F_1(W, \Psi) &= \frac{1}{2} \d \Psi, \qquad D F_3(W, \Psi) = a_1, \\
D F_2(W, \Psi) = D F_4(W, \Psi) &= D F_5(W, \Psi) = 0.
\end{equs}
If $W$ and $J$ belong to $\SD_P^{\gamma, w}$ with values in $\CQ_{< \gamma} \CU$, then the map 
\begin{equ}
z \mapsto \bigl(DF_i(W(z), \Psi, z) \bigr) J(z)
\end{equ}
belongs to $\SD_P^{\gamma_i, w_i}$, and the map $(W, J) \mapsto DF_i(W, \Psi) J$ is strongly locally Lipschitz continuous, as follows from \cite[Prop.~6.12]{Regularity}. This yields Assumption~9 in \cite{HairerMattingly}.

The previous assumptions guarantee that the abstract problem \eqref{eq:fixed-pt-new} has a local (in time) solution. Moreover, we have differentiability of the solution map, which is required by the properties described in Section~\ref{sec:how-to-prove}. We have convergence of of the lifts of mollified driving noises and we understand the action of the renormalization group on the equation, which in turn yields convergence of the renormalized solutions. It is left to describe the shift introduced in Section~\ref{sec:how-to-prove}.

Since the definition of the model for the regularity structure does not depend on the boundary conditions, Assumptions~10 and 11 in \cite{HairerMattingly} are satisfied with the space $X_0 = \CC$ and the function $G = 1$, as shown in \cite[Thm.~5.3]{HairerMattingly} for a more general equation.

Let $h_t(x) = (\CR W + \Psi)(t,x)$ be the reconstructed solution of equation \eqref{eq:fixed-pt-new}, and let $\fP_t$ be its transition semigroup at time $t > 0$. The following result follows from the just stated properties of the involved objects and \cite[Thm.~4.8]{HairerMattingly}.

\begin{proposition}\label{prop:KPZ-Feller}
The semigroup generated by the open KPZ equation \eqref{eq:openKPZ} enjoys the strong Feller property.
\end{proposition}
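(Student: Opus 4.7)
The plan is to apply \cite[Thm.~4.8]{HairerMattingly} directly. All of the work in Section~\ref{sec:Feller-regularity} preceding this statement is designed precisely to verify the hypotheses of that general result (Assumptions~6--11 of \cite{HairerMattingly}) in the regularity-structures framework of Section~\ref{sec:solution}. The proof therefore reduces to assembling the pieces and stating that the flow map of Section~\ref{sec:how-to-prove} satisfies the required bounds.

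Concretely, I would first record that the flow map takes the form $\Phi_{s,t}(h_0,\bxi) = (\CR W + \psi)(t, \cdot)$, where $W$ solves the abstract fixed point \eqref{eq:fixed-pt-new} on $[s,t]$ with initial data $h_0$ under the admissible model $\bxi$, and $\psi$ is the stochastic convolution \eqref{eq:psi}. The contractive local-in-time theory in $\SD_P^{\gamma, w}$ described in Section~\ref{sec:fixed-point}, together with the flow property of reconstruction, yields the composition identity $\Phi_{r,t}(\Phi_{s,r}(h_0,\bxi),\bxi) = \Phi_{s,t}(h_0,\bxi)$, so that $h_t := \Phi_{0,t}(h_0, \bxi)$ defines a Markov process on $\CC^\alpha([0,1])$ with transition semigroup $\fP_t$. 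I would then cite, in order, the verifications above: the continuous action of the four-dimensional renormalisation group $\fR$ on admissible models with the contraction rules for $L_0,\ldots,L_3$ (first part of Assumption~6); the sector inclusions $\CT_\poly \subset \CU \subset \CT_\poly \oplus \CT_{\geq \zeta}$, $\CT_\poly \subset V_i \subset \CT_\poly \oplus \CT_{\geq \alpha_i}$ and strong local Lipschitz continuity of the non-linearities $F_i$ (second part of Assumption~6); convergence in probability and $\CF_t$-measurability of the renormalised lifts $M_{g_\eps} \LL(\xi \star \rho^\eps) \to \bxi$ inherited from the periodic KPZ case (Assumption~7); the renormalised PDE \eqref{eq:renormalized-equation} with polynomial non-linearity in $\partial_x w$ and explicit constants $c_g^{(i)}$ (Assumption~8); and Fr\'{e}chet differentiability of $F_i$ together with strong local Lipschitz continuity of $(W,J) \mapsto D F_i(W,\Psi) J$ via \cite[Prop.~6.12]{Regularity} (Assumption~9).

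The step I expect to be the main obstacle, and which is the real content in applying \cite[Thm.~4.8]{HairerMattingly}, is the \emph{shift structure} of Assumptions~10--11: one must be able to realise a perturbation of the initial data as a perturbation of the driving white noise so that the Girsanov theorem can be invoked, as sketched in Section~\ref{sec:how-to-prove}. In the language of \cite{HairerMattingly}, this amounts to choosing the space $X_0$ of admissible noise perturbations and the operator $G$ linking noise-shifts to initial-data shifts. Because we work on $\widetilde{\CC}^\alpha([0,1])$ modulo constant shifts and the Neumann heat semigroup is smoothing on this quotient, the choice $X_0 = \CC$ and $G \equiv 1$ satisfies the hypotheses exactly as in \cite[Thm.~5.3]{HairerMattingly}, which treats a more general equation. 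With all of Assumptions~6--11 verified, \cite[Thm.~4.8]{HairerMattingly} delivers that $\fP_t$ maps bounded measurable functions on $\CC^\alpha([0,1])$ to continuous functions, which is the strong Feller property claimed in Proposition~\ref{prop:KPZ-Feller}.
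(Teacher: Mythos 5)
Your proposal is correct and follows essentially the same route as the paper: verify Assumptions~6--11 of \cite{HairerMattingly} via the regularity-structures solution theory of Section~\ref{sec:solution} (continuity of the lift and the renormalisation action, sector inclusions and strong local Lipschitz continuity of the $F_i$, measurability and convergence of the renormalised mollified models, the renormalised PDE \eqref{eq:renormalized-equation}, differentiability of the non-linearities, and the choice $X_0 = \CC$, $G \equiv 1$ for the shift structure), then invoke \cite[Thm.~4.8]{HairerMattingly}. The only small deviation is your stated rationale for Assumptions~10--11 (smoothing on the quotient space): the paper instead observes that the model and shift structure are \emph{independent of the boundary conditions}, so the analysis of \cite[Thm.~5.3]{HairerMattingly} applies verbatim, but both lead to the same citation and conclusion.
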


\section{Proof of Theorem~\ref{thm:main}}
\label{sec:proof-main}

 The stationary measure $\mu$, described in Section~\ref{sec:measure}, lives on the space $\widetilde{\CC}^\alpha$ for any $\alpha \in (0,\frac{1}{2})$. In particular, this is a Borel measure on the Polish space.

\begin{proposition}\label{prop:support}
Assume that either $u + v = 0$ or $u + v >0$, $\min(u, v)>-1$. Then $\mu$ has full support on $\widetilde{\CC}^\alpha$ for any $\alpha \in (0, \frac{1}{2})$.
\end{proposition}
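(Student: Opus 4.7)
The plan is to split the argument into the two cases of the hypothesis and reduce each to the well-known fact that the Wiener measure has full topological support on $\widetilde{\CC}^\alpha$.

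In the case $u+v=0$, Theorem~\ref{thm:WASEP-measure}(1) identifies $\ch_{u,v}(x) = B(x) + u x$ with $B$ a standard Brownian motion, so the law of $\ch_{u,v}$ is a deterministic affine shift of the Wiener measure. I would then invoke the classical fact that the Wiener measure has full support on $\widetilde{\CC}^\alpha$: this follows by combining the Cameron--Martin theorem with the strict positivity of the small-ball probability $\P(\|B\|_\alpha < \varepsilon)$ and the density in $\widetilde{\CC}^\alpha$ of smooth functions vanishing at $0$ (which lie in the Cameron--Martin space $H^1_0$). A deterministic translation preserves full support, which settles this case.

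In the case $u+v > 0$, $\min(u,v)>-1$, the plan is to use the representation \eqref{eq:invariant-representation}: $\ch_{u,v}$ has the same law as $W + Z$, where $W$ is a Brownian motion of variance $\tfrac{1}{2}$ independent of $Z := Y - Y(0)$. From the explicit density \eqref{eq:RN-deriv}, the law of $Z$ is in fact equivalent to the law of $W$, because the density is strictly positive for every continuous path $\beta$: the factor $e^{-2v\beta(1)}$ is clearly positive, and the integral $\int_0^1 e^{-2\beta(x)}\,dx$ is always strictly positive and finite, so its $(u+v)$-th inverse power is well-defined and positive. Consequently $Z$ inherits the full topological support of $W$ on $\widetilde{\CC}^\alpha$.

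Finally, the full support of the law of $W + Z$ follows from a short independence argument: for any $f \in \widetilde{\CC}^\alpha$ and $\varepsilon > 0$, the triangle inequality and the independence of $W$ and $Z$ give
\begin{equation*}
\P\bigl(\|W + Z - f\|_\alpha < \varepsilon\bigr) \geq \P\bigl(\|W\|_\alpha < \tfrac{\varepsilon}{2}\bigr)\,\P\bigl(\|Z - f\|_\alpha < \tfrac{\varepsilon}{2}\bigr),
\end{equation*}
and both factors on the right are strictly positive by the full support just established for $W$ and $Z$. I do not expect any serious technical obstacles; the only delicate ingredient is the strict positivity of the Brownian small-ball probability in the H\"older topology, which is a classical result, and an alternative route is to view $\widetilde{\CC}^\alpha$ as an abstract Wiener space for Brownian motion and argue purely through Cameron--Martin, using that $H^1_0$ is dense in $\widetilde{\CC}^\alpha$ by the defining closure procedure.
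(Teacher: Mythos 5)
Your proposal is correct and follows essentially the same route as the paper: for $u+v=0$ use the explicit Brownian-motion-with-drift characterization, and for $u+v>0$, $\min(u,v)>-1$ use the BLD representation together with strict positivity of the Radon--Nikodym derivative \eqref{eq:RN-deriv}. You make explicit the independence/triangle-inequality step that transfers full support of $W$ and $Z$ to $W+Z$, which the paper leaves largely implicit; this is a welcome clarification rather than a different argument.
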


\begin{proof}
For $u + v >0$, $\min(u, v)>-1$, from the representation \eqref{eq:invariant-representation} of the invariant measure we see that the Radon-Nikodym derivative \eqref{eq:RN-deriv} is strictly
  positive. Since the law of the Brownian motion $W$ has a full support on $\CC^\alpha$, we conclude that the same if true for the stationary distribution of the open KPZ equation. 
  
  The case $u + v = 0$ is trivial, because then the stationary measure is a Brownian motion with a linear shift (see Theorem~\ref{thm:WASEP-measure}), which has a full support. 
\end{proof}

The solution of the open KPZ equation \eqref{eq:openKPZ} is a Markov process on the space $\CC^\alpha([0, 1])$ of $\alpha$-H\"{o}lder continuous function for any $\alpha \in (0,\frac{1}{2})$. Let $\fP_t$ be its transition semigroup, i.e., $\fP_t$ is a Markov operator defined as
\begin{equ}
(\fP_t \Psi)(f) = \E_{h_0 = f} [\Psi(h_t)],
\end{equ}
for any bounded measurable function $\Psi : \CC^\alpha([0, 1]) \to \R$.

Now, we can conclude uniqueness of the invariant measure. Proposition~\ref{prop:KPZ-Feller} implies that for every $t > 0$ the semigroup $\fP_t$ is strong Feller. Uniqueness of the stationary measure then follows readily from Proposition~\ref{prop:support} and Corollary~\ref{cor:uniqueness}, applied to the Markov operator $\fP_t$.

\bibliographystyle{Martin}
 \bibliography{refs}

\end{document}